\documentclass{amsart}
\usepackage{amssymb, amsmath, amsthm, mathtools, hyperref, titlesec, amsaddr}
\hfuzz=2pt \vfuzz=3pt

\addtolength{\textwidth}{3cm}
\addtolength{\oddsidemargin}{-1.5cm}
\addtolength{\evensidemargin}{-1.5cm}

\makeatletter
\@namedef{subjclassname@2020}{%
	\textup{2020} Mathematics Subject Classification}
\makeatother

\newtheorem{theorem}{Theorem}[section]

\newtheorem{lemma}[theorem]{Lemma}
\newtheorem{corollary}[theorem]{Corollary}
\theoremstyle{definition}

\theoremstyle{remark}
\newtheorem*{remark}{Remark}

\DeclareMathOperator{\rad}{rad} 
 \DeclareMathOperator{\dist}{dist}

\newcommand{\identity}{\mathbf{1}}

\DeclareMathOperator{\Ima}{Im}
\DeclareMathOperator{\Rea}{Re}
\DeclarePairedDelimiter\abs{\lvert}{\rvert}
\newcommand{\field}{\mathbb{C}}
\titleformat{\section}{\normalfont\scshape}{\thesection}{1em}{}
\title{The multiplicative Kowalski-S\l{}odkowski Theorem for Hermitian algebras}

\author{Rudi Brits and Muhammad Hassen}
\address{ University of Johannesburg, Johannesburg, South Africa}
\author{ Cheick Tour\'e}
\address{ Reading, England}
\email{rbrits@uj.ac.za, mhassen@uj.ac.za, cheickkader89@hotmail.com }

\subjclass[2020]{46H05, 46H15, 47A10}
\keywords{Hermitian algebra, Banach algebra, spectrum, spectral radius, linear function, multiplicative function, character}

\begin{document}
	\begin{abstract}
		We prove, for Hermitian algebras, the multiplicative version of the Kowalski-S\l{}odkowski Theorem which identifies the characters among the collection of all complex valued functions on a Banach algebra $A$ in terms of a spectral condition. Specifically, we show that, if $A$ is a Hermitian algebra, and if $\phi:A\mapsto\mathbb C$ is a continuous function satisfying  $\phi(x)\phi(y) \in \sigma(xy)$ for all $x,y\in A$ (where $\sigma$ denotes the spectrum), then either  $\phi$ or $-\phi$ is a character of $A$; of course the converse holds as well. Our proof depends fundamentally on the existence of positive elements and square roots in these algebras. 
	\end{abstract}
	\maketitle
	
	\section{Introduction}
	
	 Let $A$ be a complex Banach algebra with the identity and zero denoted by $\mathbf 1$ and $\mathbf 0$ respectively. Let $G(A)$ be the invertible group of $A$, and $G_{\mathbf 1}(A)$ be the connected component of $G(A)$ containing $\mathbf 1$.  For $x\in A$, denote by $\sigma(x):=\left\{\lambda\in\mathbb C:\lambda\mathbf 1-x\not\in G(A)\right\}$ the spectrum of $x$.  A nonzero function $\phi:A\to\mathbb C$ (not assumed to be linear) is said to be \emph{multiplicative} if $\phi(xy)=\phi(x)\phi(y)$ for all $x,y\in A$. If $\phi:A\to\mathbb C$ (not assumed to be linear or multiplicative) satisfies
	$\phi(x)\phi(y) \in \sigma(xy)$ for all $x,y\in A,$ then $\phi$ is called \emph{spectrally multiplicative}.  Of course, any nonzero function $\phi:A\to\mathbb C$ which is both linear and multiplicative is called a character of $A$. A classical result, dating back to the late 1960's identifies the characters of $A$ among the linear functionals on $A$: 
	
	\begin{theorem}[Gleason-Kahane-\.{Z}elazko Theorem]\label{origin}
		Let $A$ be a complex unital Banach algebra, and suppose $\phi:A\rightarrow\mathbb C$ is linear. Then $\phi$  is a character of $A$ if and only if $\phi(x)\in\sigma(x)$ for each $x\in A$. 
	\end{theorem}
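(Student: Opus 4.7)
The plan is to handle the nontrivial direction by the classical entire-function method; the converse is immediate, since for a character $\phi$ the element $x-\phi(x)\mathbf{1}$ lies in $\ker\phi$, which is a proper ideal and therefore contains no invertible element.

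For the forward direction, assume $\phi$ is linear with $\phi(x)\in\sigma(x)$ for every $x\in A$. First I would record the easy structural consequences: $\sigma(\mathbf{1})=\{1\}$ forces $\phi(\mathbf{1})=1$; the inclusion $\phi(x)\in\sigma(x)$ together with $|\lambda|\le\|x\|$ for every $\lambda\in\sigma(x)$ shows $\phi$ is continuous; and since $0\notin\sigma(x)$ for $x\in G(A)$, the subspace $\ker\phi$ contains no invertible element.

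The technical core is the following claim: if $\phi(a)=0$, then $\phi(a^n)=0$ for every $n\ge 1$. To prove it I would fix such $a$ and examine the entire function
\[
F(\lambda)=\phi\bigl(\exp(\lambda a)\bigr)=\sum_{n\ge 0}\frac{\lambda^n}{n!}\phi(a^n).
\]
Since $\exp(\lambda a)\in G(A)$ for every $\lambda\in\mathbb{C}$, the previous step gives $F(\lambda)\ne 0$ everywhere, while continuity yields $|F(\lambda)|\le e^{|\lambda|\,\|a\|}$, so $F$ has order at most one. Hadamard's factorization theorem then forces $F(\lambda)=e^{\alpha+\beta\lambda}$; the normalizations $F(0)=1$ and $F'(0)=\phi(a)=0$ give $\alpha=\beta=0$, so $F\equiv 1$, and matching Taylor coefficients yields $\phi(a^n)=0$ for every $n\ge 1$.

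Applying the claim to $a=x-\phi(x)\mathbf{1}$ and expanding $\phi(a^2)=0$ produces the Jordan identity $\phi(x^2)=\phi(x)^2$. Linearizing against $x+y$ gives $\phi(xy+yx)=2\phi(x)\phi(y)$, and a further round of polarization yielding $\phi(xyx)=\phi(x)^2\phi(y)$, combined with $\phi((xy)^2)=\phi(xy)^2$, leads to $\phi(xy)\phi(yx)=\phi(x)^2\phi(y)^2$; together with the sum relation this forces $\phi(xy)=\phi(yx)=\phi(x)\phi(y)$. I expect the main obstacle to be the entire-function step, which depends on the nontrivial analytic input that a zero-free entire function dominated by $e^{c|\lambda|}$ has the form $e^{\alpha+\beta\lambda}$ (Hadamard's theorem for functions of order at most one); everything else is algebraic bookkeeping.
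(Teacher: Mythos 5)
The paper does not prove Theorem~\ref{origin}; it is quoted as a classical background result (Gleason--Kahane--\.{Z}elazko), so there is no internal proof to compare against. Your argument is the standard \.{Z}elazko proof and it is correct: the converse via $\ker\phi$ being a proper ideal, continuity from $|\phi(x)|\le\rho(x)\le\|x\|$, the zero-free entire function $F(\lambda)=\phi(e^{\lambda a})$ of order at most one forced to be $e^{\beta\lambda}$ by Hadamard and then constant, giving $\phi(a^n)=0$ on $\ker\phi$; and the closing algebra checks out --- polarizing $\phi(x^2)=\phi(x)^2$ gives $\phi(xy+yx)=2\phi(x)\phi(y)$ and $\phi(xyx)=\phi(x)^2\phi(y)$, whence $\phi(xy)$ and $\phi(yx)$ have sum $2\phi(x)\phi(y)$ and product $\phi(x)^2\phi(y)^2$, so both equal $\phi(x)\phi(y)$.
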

	
	   For examples showing that Theorem~\refeq{origin} does not extend to real Banach algebras look at \cite[p.43]{Moslehian}. However, in the real case, if one replaces the requirement $\phi(x)\in\sigma(x)$ for each $x\in A$ in the statement of Theorem~\refeq{origin}  by the two conditions $\phi(\mathbf 1)=1$ and $\phi(x)^2+\phi(y)^2\in\sigma(x^2+y^2)$ for all commuting pairs $x,y\in A$, then $\phi$ is a character of $A$; this is due to Kulkarni \cite{Kulkarni}.
	
			A stronger result than Theorem~\ref{origin}, obtained by Kowalski and S\l{}odkowski in \cite{Kowalski}, identifies the characters among all complex-valued functions on $A$ via a spectral condition:

		\begin{theorem}[Kowalski-S\l{}odkowski Theorem]\label{allfunctions}
		Let $A$ be a complex unital Banach algebra. Then a function $\phi:A\rightarrow\mathbb C$ is a character of $A$ if and only if $\phi$  satisfies
		\begin{itemize}
			\item[(i)]{$\phi(\mathbf 0)=0$,}
			\item[(ii)]{$\phi(x)-\phi(y)\in\sigma(x-y)$ for every $x,y\in A$. }
		\end{itemize} 
	\end{theorem}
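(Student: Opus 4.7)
The strategy is to reduce to the Gleason-Kahane-\.{Z}elazko Theorem~\ref{origin} by first establishing that $\phi$ is linear. Setting $y=\mathbf 0$ in (ii) and using (i) immediately yields $\phi(x)\in\sigma(x)$, so $|\phi(x)|\le\|x\|$; substituting this back into (ii) gives the Lipschitz estimate $|\phi(x)-\phi(y)|\le\|x-y\|$, and in particular $\phi$ is continuous. A useful auxiliary identity is $\phi(x+\lambda\mathbf 1)=\phi(x)+\lambda$ for all $\lambda\in\mathbb C$, which follows from (ii) applied to the pair $(x+\lambda\mathbf 1,x)$ since $\sigma(\lambda\mathbf 1)=\{\lambda\}$.

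The main work is the linearity of $\phi$. For fixed $x,y\in A$ I would study $f:\mathbb C\to\mathbb C$ defined by $f(\lambda):=\phi(x+\lambda y)$. Condition (ii) yields both $f(\lambda)-f(\mu)\in(\lambda-\mu)\sigma(y)$ (so $f$ is Lipschitz) and $f(\lambda)\in\sigma(x+\lambda y)$ for every $\lambda\in\mathbb C$. Since $\lambda\mapsto\sigma(x+\lambda y)$ is an analytic multifunction in the sense of S\l{}odkowski, $f$ is a continuous scalar selection from it, and a theorem of S\l{}odkowski guarantees that such a selection is actually holomorphic on all of $\mathbb C$. The linear growth supplied by the Lipschitz bound then forces, via Liouville, that $f$ is affine: $f(\lambda)=\phi(x)+\lambda\,b(x,y)$ for some scalar $b(x,y)$. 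Setting $x=\mathbf 0$ gives homogeneity $\phi(\lambda y)=\lambda\phi(y)$; a symmetric analysis exchanging the roles of $x$ and $y$, combined with the constraint $b(x,y)\in\sigma(y)$ extracted from $f(\lambda)/\lambda\in\lambda^{-1}\sigma(x+\lambda y)$ as $|\lambda|\to\infty$ using upper semicontinuity of the spectrum, pins down $b(x,y)=\phi(y)$ and thereby yields additivity $\phi(x+y)=\phi(x)+\phi(y)$.

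Once $\phi$ is linear and $\phi(x)\in\sigma(x)$ for every $x$, Theorem~\ref{origin} immediately gives that $\phi$ is a character; the converse direction is trivial, as any character is linear multiplicative and hence satisfies $\phi(\mathbf 0)=0$ and $\phi(x)-\phi(y)=\phi(x-y)\in\sigma(x-y)$. The principal obstacle is the holomorphicity step: upgrading the continuous selection property to genuine analyticity of $f$. This is the deep analytic input, relying on S\l{}odkowski's theory of analytic multifunctions, and is precisely why this result lies substantially beyond its linear predecessor Theorem~\ref{origin}. The subsequent extraction of linearity from affinity of each one-dimensional slice is technically delicate but conceptually routine, and does not require any input beyond the Lipschitz estimate and the spectral-containment condition already established.
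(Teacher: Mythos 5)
The paper does not actually prove this statement: Theorem~\ref{allfunctions} is quoted as background from the original reference of Kowalski and S\l{}odkowski, so there is no in-paper proof to compare against. Measured against the known proof, your skeleton is the right one: the deductions $\phi(x)\in\sigma(x)$, the Lipschitz estimate, the translation identity $\phi(x+\lambda\mathbf 1)=\phi(x)+\lambda$, the reduction of linearity to affinity of the slice functions $f(\lambda)=\phi(x+\lambda y)$, and the final appeal to Theorem~\ref{origin} are all correct and standard (and your recovery of $b(x,y)=\phi(y)$ is fine --- even more directly, homogeneity gives $f(\lambda)/\lambda=\phi(x/\lambda+y)\to\phi(y)$ by continuity).

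The crux, however, is unjustified. There is no theorem asserting that a continuous selection of an analytic multifunction is holomorphic: the constant multifunction $K(\lambda)=\overline{\mathbb D}$ is analytic and admits continuous selections that are nowhere holomorphic. More to the point, the only two facts you actually extract about $f$ --- namely $f(\lambda)\in\sigma(x+\lambda y)$ and $f(\lambda)-f(\mu)\in(\lambda-\mu)\sigma(y)$ --- genuinely do not imply holomorphy. For instance, if $x=\mathbf 0$ and $\sigma(y)=\overline{\mathbb D}$ (take $y\in\ell^\infty$ enumerating a dense subset of the disc), then $f(\lambda)=\tfrac12\abs{\lambda}$ satisfies both conditions and is not holomorphic anywhere; likewise $f(\lambda)=\overline{\lambda}$ has all its difference quotients on $\mathbb T\subseteq\sigma(y)$. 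So the argument cannot be run one complex line at a time: condition (ii) must be used for pairs $x+\lambda y$ and $z$ with $z$ ranging over all of $A$. The actual Kowalski--S\l{}odkowski argument uses Rademacher almost-everywhere differentiability of the Lipschitz map $f$, observes that at a point of differentiability the cluster set of difference quotients is the circle $\left\{\partial f(\lambda_0)+\overline{\partial}f(\lambda_0)e^{i\theta}\right\}\subseteq\sigma(y)$, and then must annihilate $\overline{\partial}f$ by a further, genuinely multidimensional argument (subharmonicity/scarcity of the spectrum, exploiting the full hypothesis). That is precisely the deep step of the theorem, and your proposal replaces it with an appeal to a false general principle; as written, the proof has a genuine gap at its central point.
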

	
	It is obvious, in Theorem~\ref{allfunctions}, that (i) and (ii) together 
can be replaced by the single requirement $\phi(x)+\phi(y)\in\sigma(x+y)$ for every $x,y\in A.$

     In \cite{Li}, Li, Peralta, Wang and Wang established spherical variants of the Gleason-Kahane-\.{Z}elazko and Kowalski-S\l{}odkowski Theorems which they then use to prove that every weak-2-local
    isometry between two uniform algebras is a linear map. In the following $\mathbb T$ denotes the unit circle in $\mathbb C$. 
	
		\begin{theorem}[Spherical Gleason-Kahane-\.{Z}elazko Theorem; {\cite[Proposition 2.2]{Li}}]\label{sphere1}
		Let $A$ be a complex unital Banach algebra, and suppose $\phi:A\rightarrow\mathbb C$ is a continuous linear functional. If $\phi(x)\in\mathbb T\sigma(x)$ for each $x\in A$, then $\overline{\phi(\mathbf 1)}\phi$ is a character of $A$. 
	\end{theorem}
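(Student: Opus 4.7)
The plan is to normalise $\phi$ by its value at the identity and then reduce matters to the classical Gleason-Kahane-\.{Z}elazko Theorem (Theorem~\ref{origin}). First I would test the spherical hypothesis at $\mathbf{1}$: since $\sigma(\mathbf{1}) = \{1\}$, one gets $\phi(\mathbf{1}) \in \mathbb{T}\sigma(\mathbf{1}) = \mathbb{T}$, so $|\phi(\mathbf{1})| = 1$ and in particular $\phi(\mathbf{1}) \neq 0$. Define $\psi := \overline{\phi(\mathbf{1})}\phi$; this is a continuous linear functional satisfying $\psi(\mathbf{1}) = |\phi(\mathbf{1})|^{2} = 1$, and proving the statement is equivalent to showing that $\psi$ is a character.

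The heart of the argument is to verify $\psi(x) \in \sigma(x)$ for every $x \in A$, after which Theorem~\ref{origin} concludes the proof at once. Fix $x \in A$, set $\alpha := \psi(x)$, and consider $y := x - \alpha\mathbf{1}$. Linearity gives $\psi(y) = 0$, and since $\phi = \phi(\mathbf{1})\psi$, also $\phi(y) = 0$. Applying the spherical hypothesis to $y$ then yields $0 = \phi(y) \in \mathbb{T}\sigma(y)$. Because every element of $\mathbb{T}$ is nonzero, this forces $0 \in \sigma(y)$, i.e.\ $\alpha \in \sigma(x)$, as required.

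I do not foresee any deep obstruction: conceptually, the sphere factor $\mathbb{T}$ in the hypothesis is merely a phase ambiguity at $\mathbf{1}$ that is washed out upon dividing by $\phi(\mathbf{1})$, after which the statement collapses to the standard Gleason-Kahane-\.{Z}elazko framework. The only step meriting even a second glance is the implication $0 \in \mathbb{T}\sigma(y) \Rightarrow 0 \in \sigma(y)$, which holds simply because $0 \notin \mathbb{T}$.
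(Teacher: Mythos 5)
Your argument is correct and coincides with the standard proof of this result (the paper only cites it from \cite{Li}, where Proposition 2.2 is established by exactly this reduction): one normalises by $\overline{\phi(\mathbf 1)}$, uses linearity to pass to $y=x-\psi(x)\mathbf 1$ with $\phi(y)=0$, notes that $0\in\mathbb T\sigma(y)$ forces $0\in\sigma(y)$ because $0\notin\mathbb T$, and then invokes Theorem~\ref{origin}. No gaps.
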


		\begin{theorem}[Spherical Kowalski-S\l{}odkowski Theorem; {\cite[Proposition 3.2]{Li}}]\label{sphere2}
		Let $A$ be a complex unital Banach algebra, and suppose $\phi:A\rightarrow\mathbb C$ satisfies 
		\begin{itemize}
			\item[(i)]{$\phi(\lambda x)=\lambda\phi(x$) for each $x\in A$, $\lambda\in\mathbb C$,}
			\item[(ii)]{$\phi(x)-\phi(y)\in\mathbb T\sigma(x-y)$ for every $x,y\in A$. }
		\end{itemize}
		Then $\lambda_0\phi$ is a character for some $\lambda_0\in\mathbb T$.  
	\end{theorem}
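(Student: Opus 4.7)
The plan is to reduce to the classical Kowalski-S\l{}odkowski Theorem (Theorem~\ref{allfunctions}) by identifying the correct rotation $\lambda_0 \in \mathbb{T}$. Condition (i) with $\lambda = 0$ gives $\phi(\mathbf{0}) = 0$, and taking $y = \mathbf{0}$ in (ii) yields $\phi(x) \in \mathbb{T}\sigma(x)$ for every $x \in A$; in particular $\phi(\mathbf{1}) \in \mathbb{T}$, so set $\lambda_0 := \overline{\phi(\mathbf{1})}$ and $\psi := \lambda_0 \phi$. Both (i) and (ii) are $\mathbb{T}$-invariant (for (ii), use $|\lambda_0|=1$ so that $\lambda_0\mathbb{T}=\mathbb{T}$), so $\psi$ still satisfies them, and now $\psi(\mathbf{1}) = 1$. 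The goal is then to show $\psi$ is a character.

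The first key step is to show $\psi(x) \in \sigma(x)$ for every $x \in A$. I apply (ii) to the pair $(x,\, \psi(x)\mathbf{1})$. By homogeneity, $\psi(\psi(x)\mathbf{1}) = \psi(x)\,\psi(\mathbf{1}) = \psi(x)$, so the left-hand side of (ii) vanishes:
\[
0 = \psi(x) - \psi(\psi(x)\mathbf{1}) \in \mathbb{T}\,\sigma\bigl(x - \psi(x)\mathbf{1}\bigr).
\]
Since a set of the form $\mathbb{T}\cdot S$ contains $0$ iff $S$ does, this forces $0 \in \sigma(x - \psi(x)\mathbf{1})$, equivalently $\psi(x) \in \sigma(x)$.

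The crucial and more delicate step is to upgrade condition (ii) from $\mathbb{T}\sigma(x-y)$ to $\sigma(x-y)$, i.e., to resolve the unit-circle ambiguity in favour of the trivial phase. Once this is accomplished, $\psi(\mathbf{0}) = 0$ together with $\psi(x) - \psi(y) \in \sigma(x-y)$ are precisely the hypotheses of Theorem~\ref{allfunctions}, which then gives that $\psi = \lambda_0\phi$ is a character, as desired.

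The main obstacle is exactly this last upgrade: homogeneity alone gives no control on $\psi$ at sums, so the $\mathbb{T}$-ambiguity cannot be eliminated by a purely symbolic manipulation. My first attempt would mimic the trick of the previous step in greater generality: for fixed $x, y$ set $c := \psi(x) - \psi(y)$ and apply (ii) to the pair $(x,\, y + c\mathbf{1})$, hoping the left-hand side again collapses to zero; this would yield $c \in \sigma(x-y)$ immediately, provided one can first establish the scalar translation equivariance $\psi(y + c\mathbf{1}) = \psi(y) + c$. Proving that equivariance is where I expect the real work to lie, most likely via an entire-function argument of Kowalski-S\l{}odkowski type applied to a one-parameter family such as $t \mapsto \psi(y + t(x-y))$, together with the spectral radius bound $|\psi(z)| \leq \mathrm{sup}\{|\lambda|:\lambda\in\sigma(z)\}$ that sphericity automatically supplies.
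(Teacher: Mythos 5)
Your reductions are correct as far as they go: $\phi(\mathbf 0)=0$ and $\phi(\mathbf 1)\in\mathbb T$ follow as you say, passing to $\psi=\overline{\phi(\mathbf 1)}\phi$ preserves (i) and (ii), and the observation that $0\in\mathbb T S$ iff $0\in S$ correctly yields $\psi(x)\in\sigma(x)$. (Note, incidentally, that this theorem is quoted in the paper from \cite{Li} without proof, so there is no in-paper argument to compare against.) The problem is that your proposal stops exactly where the theorem starts: the entire content of the spherical Kowalski--S\l{}odkowski theorem is the removal of the $\mathbb T$-ambiguity in (ii), and you explicitly defer this (``where I expect the real work to lie'') with only a vague gesture toward ``an entire-function argument of Kowalski--S\l{}odkowski type.'' As written, this is a plan, not a proof; the step $\psi(y+c\mathbf 1)=\psi(y)+c$ that your scheme hinges on is never established, and the spectral radius bound you invoke does not by itself produce it.

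The gap is genuinely fillable along the lines you sketch, and more cheaply than you fear. Fix $y$ and set $g(t)=\psi(y+t\mathbf 1)$. Applying (ii) to the pair $(y+t\mathbf 1,\,y+s\mathbf 1)$ gives $g(t)-g(s)\in\mathbb T(t-s)$, i.e.\ $|g(t)-g(s)|=|t-s|$, so $g$ is a distance-preserving self-map of the plane and hence affine: $g(t)=g(0)+\omega t$ or $g(t)=g(0)+\omega\bar t$ with $|\omega|=1$. Since (i) gives $\psi(t\mathbf 1)=t$, applying (ii) to $(y+t\mathbf 1,\,t\mathbf 1)$ shows that $g(t)-t$ stays in the bounded set $\mathbb T\sigma(y)$ for all $t$; letting $t$ run over large real and large purely imaginary values forces $\omega=1$ and rules out the conjugate-linear branch, so $\psi(y+t\mathbf 1)=\psi(y)+t$. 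Your intended application of (ii) to $(x,\,y+c\mathbf 1)$ with $c=\psi(x)-\psi(y)$ then gives $\psi(x)-\psi(y)\in\sigma(x-y)$, and Theorem~\ref{allfunctions} finishes the argument. Until some such argument is actually supplied, however, the proposal does not prove the statement.
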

	
	In \cite{Oi} Oi shows that if one relaxes the homogeneity condition (i) in Theorem~\ref{sphere2}
	to $\phi(\mathbf 0)=0$, then $\phi$ is either complex-linear or conjugate linear and $\overline{\phi(\mathbf 1)}\phi$ is multiplicative.
	
	 The Gleason-Kahane-\.{Z}elazko and Kowalski-S\l{}odkowski Theorems naturally lead to considering the scenario where additivity is sacrificed for multiplicativity. The first major result pertaining to this question was obtained by Maouche: 
	
	\begin{theorem}[Maouche, \cite{Maouche1996}]\label{Maouche}
		Let $A$ be a Banach algebra, and let $\phi:A\rightarrow\mathbb C$ be a multiplicative function satisfying $\phi(x)\in\sigma(x)$ for each $x\in A$. Then, corresponding to $\phi$, there exists a unique character of $A$ which agrees with $\phi$ on $G_{\mathbf 1}(A)$. 
	\end{theorem}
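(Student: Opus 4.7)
The plan is to construct a complex-linear functional $\chi$ on $A$ by means of the exponential map, and then use the classical Gleason-Kahane-\.{Z}elazko theorem (Theorem~\ref{origin}) to upgrade $\chi$ to a character that coincides with $\phi$ on the principal component. Some easy preliminaries drop out immediately: $\phi(\mathbf 1)\in\sigma(\mathbf 1)=\{1\}$ gives $\phi(\mathbf 1)=1$; writing $\phi(\lambda x)=\phi(\lambda\mathbf 1)\phi(x)$ and using $\phi(\lambda\mathbf 1)\in\sigma(\lambda\mathbf 1)=\{\lambda\}$ yields the homogeneity $\phi(\lambda x)=\lambda\phi(x)$ for every $\lambda\in\mathbb C$; and $\phi(x)\phi(x^{-1})=1$ shows that $\phi|_{G(A)}$ takes values in $\mathbb C^\ast$.

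Next, for each $a\in A$, consider $f_a(t):=\phi(\exp(ta))$ on $\mathbb R$. Since $\exp(ta)$ and $\exp(sa)$ commute, multiplicativity yields the Cauchy functional equation $f_a(t+s)=f_a(t)f_a(s)$, and the spectral hypothesis confines $f_a(t)$ to the compact set $\exp(t\sigma(a))$. The first critical step is to deduce from these two constraints that $f_a(t)=\exp(t\chi(a))$ for a unique $\chi(a)\in\sigma(a)$: for small $t$ the confinement $f_a(t)\in\exp(t\sigma(a))$ selects a unique candidate infinitesimal generator in $\sigma(a)$, and the relation $f_a(t)=f_a(t/n)^n$ propagates the choice to all real $t$. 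This procedure defines a function $\chi:A\to\mathbb C$ with $\chi(a)\in\sigma(a)$ and $\phi(\exp(ta))=\exp(t\chi(a))$ for all $t$.

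I would then check that $\chi$ is linear. Homogeneity is inherited from $\phi$. Additivity on commuting pairs is immediate: $ab=ba$ gives $\exp(t(a+b))=\exp(ta)\exp(tb)$, and applying $\phi$ together with the exponential form produces $\chi(a+b)=\chi(a)+\chi(b)$. For non-commuting pairs, the Trotter product formula $\exp(t(a+b))=\lim_n(\exp(ta/n)\exp(tb/n))^n$ combined with multiplicativity of $\phi$ and the exponential representation obtained in the previous step again gives $\chi(a+b)=\chi(a)+\chi(b)$. With $\chi$ linear and $\chi(a)\in\sigma(a)$ for every $a$, Theorem~\ref{origin} guarantees that $\chi$ is a character. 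Agreement with $\phi$ on $G_{\mathbf 1}(A)$ is automatic since this component is generated by $\exp(A)$, where $\chi(\exp(a))=\exp(\chi(a))=\phi(\exp(a))$ and both maps are multiplicative. Uniqueness is immediate: two such characters $\chi_1,\chi_2$ satisfy $\exp(\chi_1(ta))=\exp(\chi_2(ta))$ for every real $t$, which combined with linearity in $t$ forces $\chi_1=\chi_2$.

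The main obstacle is the extraction of the exponential form $f_a(t)=\exp(t\chi(a))$ from the Cauchy equation without any a priori regularity of $\phi$. The Cauchy equation alone is notoriously too weak (pathological solutions via Hamel bases exist); what rescues the argument is the strong spectral confinement $f_a(t)\in\exp(t\sigma(a))$. For $t$ in a small interval the target set clusters near $1$ and its preimage under the exponential meets $\sigma(a)$ in a single point, so the functional equation selects exactly one element of $\sigma(a)$ as the generator. Once this local structure is in hand, the rest of the proof is essentially an exercise in the Gleason-Kahane-\.{Z}elazko machinery.
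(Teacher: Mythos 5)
The paper does not prove this statement; it is imported verbatim from Maouche's note \cite{Maouche1996}, so your proposal has to stand on its own. Its architecture --- extract an infinitesimal generator $\chi(a)\in\sigma(a)$ from $t\mapsto\phi(e^{ta})$ and then feed $\chi$ into a Gleason--Kahane--\.{Z}elazko-type theorem --- is the right one. But two steps use regularity you do not have. First, in the Trotter step you pass $\phi$ through the limit $e^{t(a+b)}=\lim_n\bigl(e^{ta/n}e^{tb/n}\bigr)^n$; that requires continuity of $\phi$, which is not assumed (and the theorem is interesting precisely because $\phi$ may be discontinuous --- Maouche's closing example is of this kind). What you can salvage without continuity is this: $\phi\bigl(\bigl(e^{ta/n}e^{tb/n}\bigr)^n\bigr)=e^{t(\chi(a)+\chi(b))}$ exactly, for every $n$, and this constant lies in $\sigma\bigl(\bigl(e^{ta/n}e^{tb/n}\bigr)^n\bigr)$; upper semicontinuity of the spectrum then yields $e^{t(\chi(a)+\chi(b))}\in\sigma\bigl(e^{t(a+b)}\bigr)=e^{t\sigma(a+b)}$, hence $\chi(a)+\chi(b)\in\sigma(a+b)$ for small $t$. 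That is weaker than additivity: $\chi(a+b)$ and $\chi(a)+\chi(b)$ could a priori be two different points of $\sigma(a+b)$. Second, ``homogeneity is inherited from $\phi$'' only gives real homogeneity, since $f_{\lambda a}(t)=f_a(\lambda t)$ works for real $\lambda$ only and $\phi(\lambda x)=\lambda\phi(x)$ says nothing about the generator of $t\mapsto\phi(e^{i t a})$; so you are not entitled to the complex linearity that Theorem~\ref{origin} demands.

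Both defects vanish if you target Theorem~\ref{allfunctions} instead of Theorem~\ref{origin}. You do have $\chi(\mathbf 0)=0$ and $\chi(-a)=-\chi(a)$ (from $f_{-a}(t)=f_a(-t)$), and the upper-semicontinuity version of your Trotter argument gives $\chi(x)+\chi(y)\in\sigma(x+y)$ for all $x,y$; therefore $\chi(x)-\chi(y)=\chi(x)+\chi(-y)\in\sigma(x-y)$, and Kowalski--S\l{}odkowski makes $\chi$ a character with no linearity assumed in advance. A smaller but genuine gap is the extraction of a single generator from the Cauchy equation: injectivity of $\lambda\mapsto e^{t\lambda}$ on $\sigma(a)$ for small $t$ produces a point $\lambda_t\in\sigma(a)$ for each such $t$, and the relation $f_a(t)=f_a(t/n)^n$ only ties $t$ to its own subdivisions, not to an independent $s$. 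To see that $\lambda_t$ does not depend on $t$, note that $h(t):=t\lambda_t$ satisfies $h(t+s)=h(t)+h(s)$ on a small interval and is bounded there, hence is linear. With these repairs the argument is complete and is essentially Maouche's.
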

	
	Maouche concludes with an example (where $A=C[0,1]$) showing that $\phi$ in his theorem might not be a character itself. In \cite{Mabrouk} (stated here as Theorem~\ref{MKB}) it is shown that the missing ingredient in his example is continuity of $\phi$. So this prompts the question: If continuity of $\phi$ is added to the hypothesis in Maouche's Theorem, does it then follow that $\phi$ is a character of $A$? If this is indeed the case, then we suspect that the proof might not be easy. Observe also that continuity is not required in the hypotheses of either Theorem~\ref{origin} or Theorem~\ref{allfunctions}, and is part of the conclusion rather than the assumption. Substantial progress on the aforementioned conjecture has been made for algebras with involution. Theorems~\ref{TBS} and \ref{MKB} stated below are the latest results in this context. 
	
	\begin{theorem}[{\cite[Corollary 3.8]{Toure2}}]\label{TBS}
		Let $A$ be a $C^\star$-algebra and let $\phi:A\to\mathbb C$ be a continuous spectrally multiplicative function. Then either $\phi$ or $-\phi$ is a character of $A$. 
	\end{theorem}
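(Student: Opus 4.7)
My plan is to attack this in three stages: first extract the forced normalizations from the spectral condition; second, upgrade ``spectrally multiplicative'' to genuinely multiplicative using continuity and the continuous functional calculus of a C*-algebra; and third, invoke Maouche's Theorem~\ref{Maouche} and exploit the richness of $G_{\mathbf{1}}(A)$ in the C*-setting to conclude that the resulting character agrees with $\phi$ everywhere.

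For the normalizations, setting $x = y = \mathbf{1}$ gives $\phi(\mathbf{1})^2 \in \sigma(\mathbf{1}) = \{1\}$, so $\phi(\mathbf{1}) \in \{-1, 1\}$; replacing $\phi$ by $-\phi$ if necessary, I assume $\phi(\mathbf{1}) = 1$. Taking $y = \mathbf{1}$ then produces $\phi(x) \in \sigma(x)$ for every $x \in A$, and $\phi(\mathbf{0}) = 0$ follows from $x = y = \mathbf{0}$; invertible $x$ moreover satisfy $\phi(x)\phi(x^{-1}) = 1$, so $\phi$ is nonzero on $G(A)$.

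The heart of the proof is to establish $\phi(xy) = \phi(x)\phi(y)$. Both values lie in $\sigma(xy)$, but set-membership does not entail equality, so I would exploit the C*-structure. For self-adjoint $h$, the commutative C*-subalgebra $C^*(\mathbf{1}, h)$ is isometric to $C(\sigma(h))$, and within this Gelfand picture a peaking-function argument based on continuity of $\phi$ should pin $\phi$ down to a single point evaluation, yielding $\phi(f(h)) = f(\phi(h))$ for every continuous $f$; in particular $\phi$ is multiplicative on $C^*(\mathbf{1}, h)$. For noncommuting pairs $(x, y)$ I would use the Cartesian decomposition $x = h_1 + i h_2$, the polar decomposition $y = u|y|$, the spectral identity $\sigma(xy) \cup \{0\} = \sigma(yx) \cup \{0\}$, and the existence of positive square roots to reduce products to conjugate expressions built from commuting self-adjoints and a perturbation parameter; a continuity argument as the perturbation vanishes then extracts multiplicativity in general. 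Once $\phi$ is multiplicative, Theorem~\ref{Maouche} produces a character $\chi$ with $\chi = \phi$ on $G_{\mathbf{1}}(A)$. Since every $e^a$ lies in $G_{\mathbf{1}}(A)$ and every positive invertible element is of the form $e^h$ for self-adjoint $h$, we obtain $\chi = \phi$ on all positives, whence $\chi = \phi$ on $A$ by the Cartesian decomposition, multiplicativity, and continuity.

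The main obstacle I anticipate is the multiplicativity upgrade: moving from the set-membership conditions $\phi(x)\phi(y), \phi(xy) \in \sigma(xy)$ to the pointwise equality $\phi(xy) = \phi(x)\phi(y)$ is not a formal manipulation, and it is precisely here that the C*-axioms --- the continuous functional calculus on normal elements, the existence and uniqueness of positive square roots, and the ability to travel from $\mathbf{1}$ to any positive invertible along a norm-continuous path of invertibles --- carry the argument. Without these features the implication fails, which explains why Theorem~\ref{TBS} is restricted to the C*-setting; the analogue for Hermitian algebras presumably succeeds because just enough of this toolkit survives in that broader context.
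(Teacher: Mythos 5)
Your first stage (the normalization $\phi(\mathbf 1)^2\in\sigma(\mathbf 1)$, hence $\phi(\mathbf 1)=\pm 1$, reduce to $\phi(\mathbf 1)=1$) is correct and is exactly how the paper passes from Theorem~\ref{character} to Corollary~\ref{character1}. The genuine gap is the step you yourself flag as the main obstacle: you never actually prove that $\phi$ is multiplicative, and the two devices you sketch for doing so do not carry the weight. In the commutative case $A\cong C(K)$, the claim that a continuous $\phi$ with $\phi(f)\phi(g)\in\sigma(fg)$ and $\phi(\mathbf 1)=1$ is a point evaluation is already the full strength of the theorem restricted to $C(K)$; nothing in the hypotheses hands you even $\phi(f^2)=\phi(f)^2$ (you only know both $\phi(f^2)$ and $\phi(f)^2$ lie in the range of $f^2$), so there is no starting point for a ``peaking-function'' argument. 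For noncommuting pairs, the polar decomposition $y=u\lvert y\rvert$ does not in general live inside $A$ unless $y$ is invertible, and it is not explained how conjugation identities would interact with a hypothesis that is only a set membership in $\sigma(xy)$. Finally, your plan to feed the (unproved) multiplicativity into Maouche's Theorem~\ref{Maouche} runs into the very obstruction the paper records: there is currently no multiplicative Kowalski--S\l{}odkowski analogue of Theorem~\ref{Maouche}, which is precisely why the paper does not take this route. (Your last step also quietly uses additivity --- passing from agreement on positive invertibles to agreement on all of $A$ via the Cartesian decomposition --- which multiplicativity alone does not provide; if multiplicativity were in hand you should instead just cite Theorem~\ref{MKB}, since every $C^\star$-algebra is Hermitian.)

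For contrast, the actual argument (in \cite{Toure2}, and generalized in this paper) never establishes multiplicativity of $\phi$ as an intermediate step. It first manufactures a bona fide character $\psi_\phi(x)=\phi(\Rea(x))+i\phi(\Ima(x))$ out of $\phi$ (Theorem~\ref{TSB}, proved via the Lie--Trotter formula and the reality of spectra of self-adjoint elements), and then shows $\phi=\psi_\phi$ pointwise. That identification is where all the work lies: one tests $\phi$ against the elements $W_n=e^{-n\sqrt{R^2+I^2+\frac{2}{n^2}\mathbf 1}}$ and $V_n=(\mathbf 1+in\sqrt{R^2+I^2+\frac{1}{n^2}\mathbf 1})^{-1}$, controls $s(xW_n)$ and $s(xV_n)$ through the Pt\'ak functional and positive square roots (Lemmas~\ref{lem:1}--\ref{lem:3}), deduces first that $\psi_\phi(x)=0$ forces $\phi(\alpha\mathbf 1+x)\in\{0,\alpha\}$ (Lemmas~\ref{lem:4}--\ref{no_zero}), and finally rules out the value $0$ for $\alpha\neq 0$ by a compactness and maximum-modulus argument on $K_x=\{\alpha:\phi(\alpha\mathbf 1+x)=0\}$. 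As written, stages two and three of your outline together are essentially a restatement of the theorem rather than a proof of it.
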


	\begin{theorem}[{\cite[Theorem 1.2]{Mabrouk}}]\label{MKB}
		 Let $A$ be a Hermitian algebra and let $\phi:A\to\mathbb C$ be a continuous multiplicative function such that $\phi(x)\in\sigma(x)$ for each $x\in A$. Then $\phi$ is a character of $A$. 
	\end{theorem}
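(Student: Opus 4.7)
The plan is to apply Maouche's Theorem~\ref{Maouche} to obtain a character $\chi$ of $A$ agreeing with $\phi$ on $G_{\mathbf 1}(A)$, and then to upgrade this to $\phi=\chi$ everywhere by exploiting multiplicativity, continuity, and the Hermitian structure. The elementary setup is immediate: $\phi(\mathbf 1)=\phi(\mathbf 1)^2$ together with $\phi\not\equiv 0$ force $\phi(\mathbf 1)=1$; the spectral containment $\phi(\lambda\mathbf 1)\in\sigma(\lambda\mathbf 1)=\{\lambda\}$ yields $\phi(\lambda\mathbf 1)=\lambda$ and hence $\phi(\lambda x)=\lambda\phi(x)$; and Theorem~\ref{Maouche} supplies the character $\chi$.

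For invertible $x$ I would use two factorisations of $x+\lambda\mathbf 1$. On the connected component $U_0$ of $0$ in $\mathbb C\setminus(-\sigma(x))$, the element $\mathbf 1+\lambda x^{-1}$ is path-connected through invertibles to $\mathbf 1$, hence lies in $G_{\mathbf 1}(A)$, and multiplicativity gives
\[
\phi(x+\lambda\mathbf 1)=\phi(x)\,\phi(\mathbf 1+\lambda x^{-1})=\phi(x)\bigl(1+\lambda/\chi(x)\bigr).
\]
On the unbounded component $U_\infty$, $x+\lambda\mathbf 1\in G_{\mathbf 1}(A)$ directly, so $\phi(x+\lambda\mathbf 1)=\chi(x)+\lambda$. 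When $U_0=U_\infty$ the two linear-in-$\lambda$ expressions must coincide as functions, forcing $\phi(x)=\chi(x)$; otherwise, continuity of $\phi$ makes them agree on the common boundary $\partial U_0\cap\partial U_\infty\subset -\sigma(x)$, which (being the boundary of a bounded nonempty open subset of $\mathbb C$) contains more than one point, so the identity $(\phi(x)-\chi(x))\bigl(1+\lambda/\chi(x)\bigr)=0$ at some $\lambda\neq-\chi(x)$ again yields $\phi(x)=\chi(x)$.

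For non-invertible $x$, the natural idea is to pass to the invertible case via a perturbation $x+\zeta\varepsilon\mathbf 1$; this is invertible for small $\varepsilon>0$ whenever the ray $\{-\zeta t:t>0\}$ does not accumulate to $0$ inside $\sigma(x)$, a condition satisfied by some direction $\zeta$ as long as $0$ is not in the interior of $\sigma(x)$. Combined with continuity, this disposes of all such $x$. The delicate remaining case is $x$ with $0$ in the interior of $\sigma(x)$, where no direction of perturbation works. Here the Hermitian hypothesis becomes essential: for $x$ normal, the closed commutative $*$-subalgebra generated by $x$, $x^*$ and $\mathbf 1$ is Hermitian and embeds into some $C(K)$ via the Gelfand representation, and within this commutative setting one uses that $x^*x$ is positive (so $\sigma(x^*x+\varepsilon\mathbf 1)$ is bounded away from $0$ for every $\varepsilon>0$, permitting the invertible case above to give $\phi(x^*x)=\chi(x^*x)=|\chi(x)|^2$) together with a polar-decomposition-style argument to pin down $\phi(x)$. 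Non-normal $x$ are reached by approximation using the positive square root of $x^*x$, which exists in any Hermitian algebra.

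The principal obstacle is handling non-invertible $x$ with $0$ in the interior of $\sigma(x)$: no perturbation reaches such $x$ directly, and one must work through the positivity of $x^*x$ and the Gelfand representation of a commutative $*$-subalgebra---both of which are furnished by the Hermitian hypothesis---to close the argument.
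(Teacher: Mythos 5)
Your overall strategy (invoke Theorem~\ref{Maouche} to get a character $\chi$ agreeing with $\phi$ on $G_{\mathbf 1}(A)$, then force $\phi=\chi$ everywhere) is the natural first move, and the elementary setup is fine. But the two steps that carry the real weight both have genuine gaps. First, in the invertible case your passage from the bounded component $U_0$ to the unbounded component $U_\infty$ relies on $\partial U_0\cap\partial U_\infty$ being nonempty (indeed containing at least two points). That intersection can be empty: if $\sigma(x)$ is an annulus with $0$ in the inner hole (perfectly possible in a Hermitian algebra, e.g.\ a bilateral weighted shift in $B(H)$), then $\partial U_0$ is the inner circle and $\partial U_\infty$ the outer circle, and your two linear-in-$\lambda$ formulas never meet. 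Continuity of $\lambda\mapsto\phi(x+\lambda\mathbf 1)$ across $-\sigma(x)$ gives no formula on the intermediate set, so the identification $\phi(x)=\chi(x)$ does not follow. (The sub-case $U_0=U_\infty$ is also essentially vacuous: there $x$ has a holomorphic logarithm, so $x\in G_{\mathbf 1}(A)$ and Maouche already gives $\phi(x)=\chi(x)$.) Second, and more seriously, the case you yourself flag as delicate --- $0$ in the interior of $\sigma(x)$, and more generally non-normal $x$ --- is not actually argued. A Hermitian Banach algebra has no polar decomposition and no continuous functional calculus; the Gelfand transform of the closed commutative $*$-subalgebra generated by a normal element need be neither injective nor surjective onto $C(K)$; and ``reaching non-normal $x$ by approximation using $\sqrt{x^\star x}$'' is not a construction. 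These are placeholders precisely where the proof has to work.

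For comparison: the paper does not reprove this statement (it is quoted from \cite{Mabrouk}) but its main result, Theorem~\ref{character}, strengthens it, and the mechanism there shows what is actually needed. One first gets a character $\psi_\phi$ agreeing with $\phi$ on $G_{\mathbf 1}(A)\cup S_A$ (Theorem~\ref{TSB}), and then, for $x$ with $\psi_\phi(x)=0$, one manufactures elements $W_n=e^{-n\sqrt{R^2+I^2+\frac{2}{n^2}\mathbf 1}}$ and $V_n=(\mathbf 1+in\sqrt{R^2+I^2+\frac{1}{n^2}\mathbf 1})^{-1}$ whose $\phi$-values are pinned down (they lie in $G_{\mathbf 1}(A)$) while $s(xW_n)\to 0$ and $s(xV_n)\to 0$; this uses the holomorphic square root of strictly positive elements, the Shirali--Ford theorem and the Pt\'ak functional (Lemmas~\ref{lem:1}--\ref{lem:3}), and then a compactness argument on $K_x=\{\alpha:\phi(\alpha\mathbf 1+x)=0\}$. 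Some device of this kind --- elements that annihilate $x$ multiplicatively in spectral radius while keeping $\phi$ bounded away from $0$ --- appears unavoidable, and it is exactly what your sketch lacks.
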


The aim of the current paper is to simultaneously improve on Theorems~\ref{TBS} and \ref{MKB}. Specifically, Theorem~\ref{TBS} holds for all Hermitian algebras; of course this means that the multiplicativity requirement on $\phi$ in Theorem~\ref{MKB} can be relaxed to spectral multiplicativity with the conclusion that either $\phi$ or $-\phi$ is a character of $A$ (in Theorem~\ref{MKB} the fact that $\phi(\mathbf 1)=1$ is immediate from the hypothesis). Expectedly, our proof combines techniques used in 
\cite{Mabrouk} and \cite{Toure2}, but we also need the following result, Theorem~\ref{TSB}, from \cite{Toure1} since there is currently no multiplicative Kowalski-S\l{}odkowski version of Theorem~\ref{Maouche}. Of course, from this emerges another conjecture. If $A$ is a unital Hermitian algebra, we denote by $S_A$ the self-adjoint elements of $A$.

	\begin{theorem}[{\cite[Theorem 3.7, Theorem 3.9]{Toure1}}]\label{TSB}
	Let $A$ be a Hermitian algebra and let $\phi:A\to\mathbb C$ be a continuous spectrally multiplicative function satisfying $\phi(\mathbf 1)=1$. Then the formula $$\psi_\phi(x):=\phi\left(\Rea(x)\right)+i\phi\left(\Ima(x)\right)$$ defines a character of $A$. Further, $\psi_\phi$ agrees with $\phi$ on $G_{\mathbf 1}(A)\cup S_A.$
\end{theorem}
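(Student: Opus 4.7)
The plan is to build the putative character $\psi_\phi$ by extracting sufficient linearity and spectral information from the multiplicative spectral condition on $\phi$. The preliminary observation is immediate: setting $y=\mathbf{1}$ in the defining inclusion and using $\phi(\mathbf{1})=1$ gives $\phi(x)\in\sigma(x)$ for every $x\in A$. Since $A$ is Hermitian this forces $\phi|_{S_A}\subseteq\mathbb{R}$, while $\phi(\mathbf{0})^2\in\sigma(\mathbf{0})=\{0\}$ gives $\phi(\mathbf{0})=0$. Consequently $\psi_\phi$ is a well-defined $\mathbb{C}$-valued map on $A$, and the identity $\psi_\phi(x)=\phi(x)$ for $x\in S_A$ is automatic, already disposing of half of the agreement claim.

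The technical heart of the proof is to show that $\phi|_{S_A}$ is $\mathbb{R}$-linear, since the decomposition $A=S_A\oplus iS_A$ will then ensure that the defining formula for $\psi_\phi$ produces a $\mathbb{C}$-linear, continuous function. Homogeneity $\phi(tx)=t\phi(x)$ for $t\in\mathbb{R}$ is relatively soft, following from $\sigma(tx)=t\sigma(x)$ combined with continuity in $t$ and the availability of self-adjoint square roots in the Hermitian positive cone to fix the branch. Additivity $\phi(x+y)=\phi(x)+\phi(y)$ on $S_A$ is the main obstacle. I would attack it by passing to one-parameter groups $e^{sx},e^{sy}\in G_{\mathbf{1}}(A)$ and applying spectral multiplicativity to products such as $e^{sx}e^{sy}$ and $e^{s(x+y)}$; a careful analysis as $s\to 0$ should yield an infinitesimal additivity relation which continuity of $\phi$, together with interpolation through positive elements, then promotes to the global statement on $S_A$.

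Once $\mathbb{R}$-linearity of $\phi|_{S_A}$ is in hand, $\psi_\phi$ is $\mathbb{C}$-linear and continuous, and to conclude it is a character it suffices to verify the Gleason--Kahane--\.{Z}elazko condition $\psi_\phi(z)\in\sigma(z)$ for every $z\in A$; Theorem~\ref{origin} then finishes the job. For $z\in S_A$ this is immediate. For a general $z=a+ib$ with $a,b\in S_A$ one needs $\phi(a)+i\phi(b)\in\sigma(a+ib)$, which I would attempt via a further use of spectral multiplicativity applied to products involving $z$, combined with the already-established linearity of $\phi|_{S_A}$, once again leaning on continuity and the Hermitian positive structure to pin down the correct spectral value.

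Finally, agreement of $\psi_\phi$ with $\phi$ on $G_{\mathbf{1}}(A)$ will come from the fact that this component is generated by exponentials. For $x\in S_A$ the character identity $\psi_\phi(e^x)=e^{\phi(x)}$ together with the inclusion $\phi(e^x)\in e^{\sigma(x)}$ and a continuity-in-$x$ branch-selection argument near $\mathbf{0}$ force $\phi(e^x)=\psi_\phi(e^x)$, and spectral multiplicativity propagates this equality through all finite products of exponentials. By a clear margin, the hardest step in the entire scheme is the additivity of $\phi$ on $S_A$: only a multiplicative spectral condition is available, yet an essentially additive conclusion is demanded, and this is precisely where the positive cone and square-root machinery of the Hermitian hypothesis become indispensable.
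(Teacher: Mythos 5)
A point of context first: the paper does not actually prove Theorem~\ref{TSB}; it imports it from \cite{Toure1} and only remarks that the $C^\star$-algebra proof given there transfers to Hermitian algebras because it uses just two ingredients, namely the reality of spectra of self-adjoint elements and the Lie--Trotter formula $\lim_n\left(e^{x/n}e^{y/n}\right)^n=e^{x+y}$. Your overall architecture (establish $\mathbb{R}$-linearity of $\phi$ on $S_A$, complexify, then invoke Gleason--Kahane--\.{Z}elazko) is consistent with the cited proof, and your preliminary reductions --- $\phi(x)\in\sigma(x)$, $\phi(S_A)\subseteq\mathbb{R}$, $\phi(\mathbf{0})=0$, and the automatic agreement of $\psi_\phi$ with $\phi$ on $S_A$ --- are correct and routine.

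The difficulty is that the steps you yourself identify as the substance of the theorem are left as hopes, and in the central case the proposed mechanism would not work. For additivity on $S_A$ you suggest comparing $e^{sx}e^{sy}$ with $e^{s(x+y)}$ and extracting an ``infinitesimal additivity relation'' as $s\to 0$. But $\phi$ is only assumed continuous, not differentiable along one-parameter groups, and as $s\to 0$ the inclusion $\phi(e^{sx})\phi(e^{sy})\in\sigma(e^{sx}e^{sy})$ degenerates (every quantity tends to $1$), so no first-order information can be read off. The argument that actually works stays at unit scale: one iterates the two-variable hypothesis to relate $\phi(e^{x/n})^n\phi(e^{y/n})^n$ to $\sigma\bigl((e^{x/n}e^{y/n})^n\bigr)$, lets $n\to\infty$ via Lie--Trotter and upper semicontinuity of the spectrum to land in $\sigma(e^{x+y})=e^{\sigma(x+y)}$, and then uses the reality of $\sigma(x+y)$ to kill the $2\pi i$ ambiguity in taking logarithms; the iteration step (passing from a condition on products of two elements to control of $n$-fold products) is itself nontrivial and is absent from your sketch. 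Secondly, homogeneity is not ``relatively soft'': the inclusion $\phi(tx)\in t\sigma(x)$ together with continuity in $t$ pins down nothing when $\sigma(x)$ is connected (e.g.\ an interval), so the branch-selection you allude to has no starting point and this step too needs the multiplicative hypothesis in an essential way. Finally, the verification of $\phi(a)+i\phi(b)\in\sigma(a+ib)$ for general $a,b\in S_A$ is merely announced as something you ``would attempt.'' As written, the proposal is an accurate table of contents for the proof, but the parts containing the actual mathematics are not supplied.
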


\begin{remark}
	In \cite{Toure1} Theorem~\ref{TSB} was proved for $C^\star$-algebras. A close inspection of the proof shows that the arguments presented there rely only on the fact that self-adjoint elements have real spectra, and the Lie-Trotter formula which says that for $x,y$ in any unital Banach algebra $A$
	$$\lim_{n\rightarrow\infty}\left(e^{x/n}e^{y/n}\right)^n=e^{x+y}.$$	
	Theorem~\ref{TSB} is therefore also valid for Hermitian algebras. 
	In \cite{EPV} Escolano, Peralta, and Villena derive a number of Lie-Trotter formulae for Jordan-Banach algebras and they then use this to derive the multiplicative version of the Gleason-Kahane-\.{Z}elazko Theorem for $JB^\star$-algebras \cite[Theorem 4.3]{EPV}. 
\end{remark}

	\section{Preliminaries} 
	
 Throughout the remainder of this paper $A$ will be a complex unital Hermitian algebra. By definition the \emph{positive} and \emph{strictly positive} elements of $A$ are those self-adjoint elements of $A$ whose spectra are contained in $\mathbb{R}^{\geq 0}$ and $\mathbb{R}^{>0}$, respectively. For $a,b\in S_A$ we will write $a\leq b$ (resp. $a<b$) if $0\leq b-a$ (resp. $0<b-a$). If $\rho$ denotes the spectral radius, then we define the \emph{Pt\'ak functional} by $s(x)\coloneqq \sqrt{\rho(x^\star x)}$, and we recall that $s$ is both subadditive and submultiplicative. Further, since the Pt\'ak functional dominates the spectral radius it is, as in the case with the norm, useful for detecting invertibility. From \cite[Theorem 11.20]{Rudin}, if $0<a$, then $a$ has a positive square root. That is, there exists $y\in A$ such that $y^2=a$, $\sigma(y)\subset(0,\infty)$ and $y=y^\star$. In particular, this square root is not given by a continuous functional calculus but rather by the holomorphic functional calculus \emph{viz}
 \begin{equation}\label{sqrt}
	y=\frac{1}{2\pi i}\int\limits_\Gamma e^{\frac{1}{2}\log\lambda}\left(\lambda\identity-a\right)^{-1}\,d\lambda,
\end{equation}
	 where $\log\lambda $ is the principal branch of the complex logarithm, and $\Gamma$ is a smooth, positively oriented, contour surrounding $\sigma(a)$, not separating $0$ from infinity. The first two properties of $y$ in \eqref{sqrt} are obvious; that $y=y^\star$ is more interesting, and a nice exposition of the argument can be found in \cite{Rudin}. 
	 We denote this positive square root by $y=\sqrt a$.
	 
	   Hermitian algebras are not necessarily semisimple, and in these cases the involution may not be continuous. To deal with this situation (in the forthcoming section) denote by $\rad(A)$ the radical of $A$, and let $\pi:A\to A/\rad(A)$ be the canonical homomorphism. It is elementary to show that $\pi(x)^\star:=\pi(x^\star)$ is a well-defined
	involution on $A/\rad(A)$. Since $\sigma(x)=\sigma(\pi(x))$ for all $x\in A$ it follows that $A/\rad(A)$ is a semisimple Hermitian algebra, and necessarily the involution here is continuous. Moreover, the above spectral equality implies that if $x\in A$ and $x$ is normal modulo $\rad(A)$ i.e $x^\star x-xx^\star\in\rad(A),$ then $s(x)=\rho(x).$

	\section{The multiplicative Kowalski-S\l{}odkowski Theorem for Hermitian algebras}
To simplify our arguments we shall initially assume that $\phi$ satisfies $\phi(\identity)=1$. We start with a few easy but somewhat non-trivial lemmas: 
	 
	\begin{lemma}\label{lem:1}
		Let $\left(a_n\right)$ and $\left(b_n\right)$ be sequences in $S_A$ such that $a_n^2 < b_n^2$ for all $n\in \mathbb{N}$. If $\left(v_n\right)$ is a sequence in $A$ such that $s(b_nv_n)\to 0$, then $s(a_nv_n)\to 0$.
	\end{lemma}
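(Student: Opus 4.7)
The plan is to exploit the existence of positive square roots in $A$ together with the submultiplicativity of the Pt\'ak functional $s$, reducing the statement to a uniform bound of the form $s(a_n b_n^{-1}) \leq 1$. First I would observe that since $a_n^\star = a_n$, the Hermiticity of $A$ gives $a_n^2 = a_n^\star a_n \geq \mathbf 0$; combined with $\mathbf 0 < b_n^2 - a_n^2$ this yields $b_n^2 > \mathbf 0$. In particular $b_n$ is invertible, with self-adjoint inverse $b_n^{-1}$, and the strictly positive element $b_n^2 - a_n^2$ admits a positive square root $c_n$ via \eqref{sqrt}, so that $a_n^2 + c_n^2 = b_n^2$.

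Next I would establish the key bound $s(a_n b_n^{-1}) \leq 1$. Define
\[
 Y_n \;:=\; (a_n b_n^{-1})^\star (a_n b_n^{-1}) \;=\; b_n^{-1} a_n^2 b_n^{-1}, \qquad X_n \;:=\; (c_n b_n^{-1})^\star (c_n b_n^{-1}) \;=\; b_n^{-1} c_n^2 b_n^{-1}.
\]
Both $X_n$ and $Y_n$ are of the form $x^\star x$ and so are positive (this is where $A$ being Hermitian is essential). A direct calculation gives
\[
 X_n + Y_n \;=\; b_n^{-1}(a_n^2 + c_n^2)b_n^{-1} \;=\; b_n^{-1} b_n^2 b_n^{-1} \;=\; \mathbf 1,
\]
so $\sigma(Y_n) = 1 - \sigma(X_n) \subseteq (-\infty, 1]$. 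Combined with $\sigma(Y_n) \subseteq [0,\infty)$ this forces $\sigma(Y_n) \subseteq [0,1]$, and hence $s(a_n b_n^{-1})^2 = \rho(Y_n) \leq 1$.

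Finally, by submultiplicativity of $s$,
\[
 s(a_n v_n) \;=\; s\bigl((a_n b_n^{-1})(b_n v_n)\bigr) \;\leq\; s(a_n b_n^{-1})\, s(b_n v_n) \;\leq\; s(b_n v_n) \;\longrightarrow\; 0,
\]
which yields the claim.

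The delicate step on which the whole argument hinges is the identity $X_n + Y_n = \mathbf 1$: it relies on the decomposition $b_n^2 = a_n^2 + c_n^2$ furnished by the positive square root together with the self-adjointness of $b_n^{-1}$. Once this is in place, positivity of $X_n$ alone — rather than any form of monotonicity of the spectral radius on the positive cone, which in a not-necessarily-semisimple Hermitian algebra would require passage through $A/\rad(A)$ — is enough to confine $\sigma(Y_n)$ in $[0,1]$ and deliver the required uniform bound.
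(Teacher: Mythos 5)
Your proof is correct, but it takes a genuinely different route from the paper's. The paper conjugates by $v_n$ directly: it writes $s(a_nv_n)^2=\rho(v_n^\star a_n^2v_n)$, uses the Shirali--Ford theorem to get $v_n^\star a_n^2v_n\leq v_n^\star b_n^2v_n\leq\rho(v_n^\star b_n^2v_n)\mathbf{1}$, and concludes $s(a_nv_n)\leq s(b_nv_n)$ for each $n$ by monotonicity of the spectrum under the order relation. You instead invert $b_n$ and prove the uniform bound $s(a_nb_n^{-1})\leq 1$ via the complementary positive element $X_n=b_n^{-1}c_n^2b_n^{-1}$ and the identity $X_n+Y_n=\mathbf{1}$, then finish with submultiplicativity of $s$. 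Both arguments ultimately rest on Shirali--Ford (positivity of $x^\star x$) and on the cone property of positives in a Hermitian algebra --- you need the latter to pass from $a_n^2\geq\mathbf 0$ and $b_n^2-a_n^2>\mathbf 0$ to $b_n^2>\mathbf 0$, just as the paper needs it to chain its inequalities --- so neither is more elementary, and both deliver the same per-$n$ inequality $s(a_nv_n)\leq s(b_nv_n)$. What your version buys is the clean, reusable quantitative fact that $a^2<b^2$ forces $s(ab^{-1})\leq 1$, obtained without comparing spectral radii of two different positive elements; what it costs is the extra step of justifying invertibility of $b_n$ and the explicit square root $c_n$, neither of which the paper's direct comparison needs to make visible. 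One small caveat on your closing remark: the paper's proof does not in fact require passage through $A/\rad(A)$ either --- the implication $0\leq u\leq M\mathbf{1}\Rightarrow\rho(u)\leq M$ is pure spectral containment --- so the advantage you claim there is overstated, though it does not affect the validity of your argument.
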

	\begin{proof}
		It suffices to show that $s(a_nv_n)^2\leq s(b_nv_n)^2$ for each $n$. Observe that
		$$s(a_nv_n)^2=\rho((a_nv_n)^\star a_nv_n)=\rho(v_n^\star a_n^2 v_n).$$
		Since $0<b_n^2-a_n^2$ it follows, using the Shirali-Ford Theorem, that $v_n^\star a_n^2 v_n \leq v_n^\star b_n^2 v_n $ for each $n$. Moreover, $v_n^\star b_n^2 v_n \leq \rho(v_n^\star b_n^2 v_n) \identity$ and hence
		$\rho(v_n^\star a_n^2 v_n)\leq \rho(v_n^\star b_n^2 v_n),$ for every $n\in\mathbb{N}.$
		But then, $$s(a_nv_n)^2=\rho(v_n^\star a_n^2 v_n) \leq \rho(v_n^\star b_n^2 v_n) = s(b_nv_n)^2.$$
	\end{proof}

	\begin{lemma}\label{lem:2}
			Let $a\in S_A$, and let $(v_n)$ be a sequence in $A$ such that $s\left(\sqrt{a^2+\frac{1}{n^2}\mathbf 1}\, v_n\right)\to 0$. Then $s(av_n)\to 0$.
	\end{lemma}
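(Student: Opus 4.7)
The plan is to apply Lemma~\ref{lem:1} directly, with $a_n := a$ (constant sequence) and $b_n := \sqrt{a^2 + \tfrac{1}{n^2}\mathbf 1}$. The whole proof reduces to verifying the two hypotheses of Lemma~\ref{lem:1}: that both sequences lie in $S_A$, and that $a_n^2 < b_n^2$.

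First, I would check that $b_n$ is well-defined and self-adjoint. Since $a\in S_A$, $\sigma(a)\subset\mathbb R$, and so by the spectral mapping theorem $\sigma(a^2)\subset[0,\infty)$. Hence $\sigma\!\left(a^2 + \tfrac{1}{n^2}\mathbf 1\right)\subset\bigl[\tfrac{1}{n^2},\infty\bigr)$, meaning that $a^2 + \tfrac{1}{n^2}\mathbf 1$ is strictly positive. By the result from \cite[Theorem 11.20]{Rudin} quoted in the Preliminaries (and the holomorphic-functional-calculus formula~\eqref{sqrt}), this element admits a positive square root, which is self-adjoint. So $b_n\in S_A$, and of course $a_n = a \in S_A$ as well.

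Next, observe that by construction
\[
b_n^2 - a_n^2 = \left(a^2 + \tfrac{1}{n^2}\mathbf 1\right) - a^2 = \tfrac{1}{n^2}\mathbf 1,
\]
which has spectrum $\{1/n^2\}\subset(0,\infty)$, i.e., $0 < b_n^2 - a_n^2$. Hence $a_n^2 < b_n^2$ for every $n\in\mathbb N$. Since the hypothesis of the lemma is exactly $s(b_n v_n)\to 0$, Lemma~\ref{lem:1} immediately yields $s(a_n v_n) = s(av_n)\to 0$, as required.

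There is no serious obstacle here: this lemma is really a packaging lemma that prepares the strictly-positive-perturbation $a^2 + \tfrac{1}{n^2}\mathbf 1$ for later use (where one wants to work with invertible/strictly positive square roots but conclude something about the possibly non-invertible $a$). The only thing one has to be slightly careful about is invoking the Hermitian-algebra hypothesis to justify that $a^2$ has nonnegative spectrum, so that the square root of $a^2 + \tfrac{1}{n^2}\mathbf 1$ exists via \eqref{sqrt}.
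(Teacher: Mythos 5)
Your proof is correct, and it reaches the conclusion by a genuinely more direct route than the paper. The paper does not apply Lemma~\ref{lem:1} to the pair $\bigl(a,\sqrt{a^2+\frac{1}{n^2}\mathbf 1}\bigr)$; instead, writing $c_n=\sqrt{a^2+\frac{1}{n^2}\mathbf 1}$, it verifies the two-sided inequality $0<(c_n-a)^2<(2c_n)^2$, applies Lemma~\ref{lem:1} to the pair $(c_n-a,\,2c_n)$ to obtain $s\bigl((c_n-a)v_n\bigr)\to 0$, and then recovers $s(av_n)\to 0$ from the subadditivity of $s$ via $av_n=c_nv_n-(c_n-a)v_n$. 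Your choice $a_n=a$, $b_n=c_n$ satisfies the hypotheses of Lemma~\ref{lem:1} verbatim: both elements are self-adjoint (the square root produced by \eqref{sqrt} is self-adjoint, and $a^2+\frac{1}{n^2}\mathbf 1$ is strictly positive precisely because $\sigma(a)\subset\mathbb R$ in a Hermitian algebra, as you note), and $b_n^2-a_n^2=\frac{1}{n^2}\mathbf 1$ is trivially strictly positive. Nothing in Lemma~\ref{lem:1} or its proof requires $a_n$ to be positive or non-constant, so the one-step application is legitimate. What your route buys is the elimination of the inequality $0<(c_n-a)^2<(2c_n)^2$, whose careful justification (that $c_n-a$ is invertible and that $(2c_n)^2-(c_n-a)^2=(3c_n-a)(c_n+a)$ is strictly positive) requires a small computation in the commutative bicommutant that the paper leaves implicit; in exchange the paper's argument illustrates the subadditivity trick that recurs elsewhere. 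Your observation also shows that Lemma~\ref{lem:2} is really a special case of Lemma~\ref{lem:1}, and indeed the same one-step application (with $a_n=R$, $b_n=\sqrt{R^2+I^2+\frac{2}{n^2}\mathbf 1}$) would shortcut the chain of Lemmas~\ref{lem:1} and~\ref{lem:2} used in the proof of Lemma~\ref{lem:4}.
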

	\begin{proof}
		Obviously, from the assumption, $s\left(2\sqrt{a^2+\frac{1}{n^2}\mathbf 1}\, v_n\right)\to 0$. Observe that for each $n\in\mathbb{N}$
		$$0 < \left(\sqrt{a^2+\frac{1}{n^2}\mathbf 1}-a\right)^2 < \left(2\sqrt{a^2+\frac{1}{n^2}\mathbf 1}\right)^2.$$
		Hence, Lemma \ref{lem:1} says that $s\left(\left(\sqrt{a^2+\frac{1}{n^2}\mathbf 1}-a\right)v_n \right)\to 0$. Using the subadditivity of $s$, we get that $s(av_n)\to 0$.
	\end{proof}
	
	Since $xe^x$ is the limit of a sequence of polynomials in $x$ with real coefficients we have the following easy 
	\begin{lemma}\label{lem:0}
		$A$ be a Hermitian algebra and suppose $x\in S_A$. Then $xe^x$  is self-adjoint modulo $\rad(A)$. 
	\end{lemma}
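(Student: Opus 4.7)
The plan is to exploit exactly the parenthetical hint in the lemma statement, using the convergence of the Taylor series of $xe^x$ together with the continuity of the induced involution on $A/\rad(A)$.

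First I would write out the approximating polynomials explicitly: set
$$p_n(x) \coloneqq \sum_{k=0}^{n} \frac{x^{k+1}}{k!},$$
which are polynomials in $x$ with real coefficients, and observe that $p_n(x) \to xe^x$ in norm. Since $x=x^\star$ and the coefficients are real, each partial sum satisfies $p_n(x)^\star = p_n(x^\star) = p_n(x)$, so every $p_n(x)$ is self-adjoint in $A$.

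The only subtlety is that the involution on $A$ itself may not be continuous, so I cannot directly pass to the limit on $p_n(x)^\star$. This is precisely where I would pass to the quotient $A/\rad(A)$ discussed in the preliminaries: the induced involution $\pi(a)^\star \coloneqq \pi(a^\star)$ on $A/\rad(A)$ is continuous because $A/\rad(A)$ is a semisimple Hermitian algebra. Applying $\pi$ (which is continuous) to $p_n(x) \to xe^x$ gives $\pi(p_n(x)) \to \pi(xe^x)$, and then applying the (now continuous) involution on $A/\rad(A)$ yields
$$\pi(p_n(x))^\star \longrightarrow \pi(xe^x)^\star.$$
But $\pi(p_n(x))^\star = \pi(p_n(x)^\star) = \pi(p_n(x)) \to \pi(xe^x)$, so by uniqueness of limits $\pi(xe^x)^\star = \pi(xe^x)$, which is exactly the statement that $(xe^x)^\star - xe^x \in \rad(A)$, i.e.\ $xe^x$ is self-adjoint modulo $\rad(A)$.

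There is no real obstacle here; the only thing one has to be careful about is not invoking continuity of the involution on $A$ itself. The quotient device already set up in the preliminaries handles this cleanly, which is presumably why the authors call the lemma ``easy.''
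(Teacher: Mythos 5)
Your proof is correct and is precisely the argument the paper intends: the authors give no explicit proof, only the remark that $xe^x$ is a norm-limit of real-coefficient polynomials in $x$, and your write-up fills in exactly the intended details (self-adjointness of the partial sums, plus continuity of the induced involution on the semisimple quotient $A/\rad(A)$ to pass to the limit).
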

	Lemma~\ref{lem:0} can now be used to obtain the next
	\begin{lemma}\label{lem:3}
		Let $0\leq a$. Then 
		\begin{itemize}
		\item[\rm (a)] $\lim\limits_{n\to\infty}s\left(\sqrt{a+\frac{1}{n^2}\mathbf 1}\,e^{-n\sqrt{a+\frac{1}{n^2}\mathbf 1}}\right)=0,$
		\item[\rm (b)]
		$\lim\limits_{n\to\infty}s \left(\sqrt{a+\frac{1}{n^2}\mathbf 1}\left(\identity+in\sqrt{a+\frac{1}{n^2}\mathbf 1}\right)^{-1}\right)=0.$
		\end{itemize}
	\end{lemma}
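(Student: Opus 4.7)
The plan is to set $b_n := \sqrt{a + \frac{1}{n^2}\identity}$, which by \eqref{sqrt} is a well-defined element of $S_A$ with $\sigma(b_n) \subset [1/n, \infty)$. Both elements whose Pt\'ak value we wish to control are holomorphic functional calculi of the single self-adjoint element $b_n$, so my intention is to argue that each is normal modulo $\rad(A)$ (so that $s = \rho$ on it, by the observation recorded at the end of Section~2) and then bound $\rho$ using the spectral mapping theorem.

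For (a), I would apply Lemma~\ref{lem:0} to $x := -n b_n \in S_A$ to conclude that $(-n b_n)e^{-n b_n}$, and therefore $b_n e^{-n b_n}$, is self-adjoint modulo $\rad(A)$; in particular it is normal modulo $\rad(A)$, so $s(b_n e^{-n b_n}) = \rho(b_n e^{-n b_n})$. The spectral mapping theorem applied to the entire function $f(\lambda) = \lambda e^{-n\lambda}$ gives $\sigma(b_n e^{-n b_n}) = f(\sigma(b_n))$; a standard one-variable calculation shows that on $[1/n, \infty)$ the function $f$ attains its maximum $1/(ne)$ at $\lambda = 1/n$, so $s(b_n e^{-n b_n}) \leq 1/(ne) \to 0$.

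For (b), write $g(\lambda) := \lambda/(1 + in\lambda)$, which is holomorphic on a neighbourhood of $\sigma(b_n) \subset \mathbb{R}$, so that $g(b_n) = b_n(\identity + i n b_n)^{-1}$. Since $b_n^\star = b_n$ literally and inversion commutes with involution, a purely algebraic manipulation gives $g(b_n)^\star = b_n(\identity - i n b_n)^{-1}$; the two elements commute, being rational functions of $b_n$ in the commutative subalgebra they generate together with $\identity$, so $g(b_n)$ is (literally) normal and $s(g(b_n)) = \rho(g(b_n))$. Spectral mapping then bounds
$$\rho(g(b_n)) \;\leq\; \sup_{\lambda \geq 1/n} \frac{\lambda}{\sqrt{1 + n^2 \lambda^2}} \;\leq\; \frac{1}{n} \;\to\; 0.$$

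The main subtlety to watch for is that the involution on $A$ need not be continuous, so $e^{-n b_n}$ may not be literally self-adjoint even though $b_n$ is; Lemma~\ref{lem:0} is crafted precisely to yield self-adjointness modulo $\rad(A)$, and, coupled with the preliminaries' identity $s = \rho$ on elements normal modulo $\rad(A)$, this closes the gap in (a). Part (b) escapes this obstruction because $g(b_n)$ is a rational expression in $b_n$, so its adjoint is computed purely algebraically.
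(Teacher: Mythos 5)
Your proof is correct and follows essentially the same route as the paper: Lemma~\ref{lem:0} supplies self-adjointness modulo the radical in (a), genuine normality handles (b), and in both cases the identity $s=\rho$ for elements normal modulo $\rad(A)$ reduces the problem to a scalar supremum over $[1/n,\infty)$, yielding the bounds $1/(ne)$ and $1/n$. The only cosmetic difference is that you invoke the holomorphic spectral mapping theorem directly, whereas the paper routes the same scalar reduction through the characters of the bicommutant of the commuting family $\{a_n\}$; the two devices are interchangeable here.
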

	\begin{proof}
		Let $(a_n)$ be a sequence in $A$ with $0<a_n$ for each $n\in \mathbb{N}$, and suppose further that $\{a_n\colon n\in \mathbb{N}\}$ is a commutative subset of $A$. It follows that the bicommutant $B$ of $\{a_n\colon n\in \mathbb{N}\}$ is a Hermitian subalgebra of $A$ preserving the spectra of elements. Consequently, the set $\mathfrak{M}$ of characters of $B$ is nonempty. With $b_n=a_ne^{-na_n}$ we have, using Lemma~\ref{lem:0}, that each $b_n$ is self-adjoint modulo $\rad(B)$ from which it follows that
		$$s(b_n)=\rho(b_n)=\sup\left\{\chi(a_n)e^{-n\chi(a_n)}\colon \chi\in\mathfrak{M}\right\}\leq \sup\left\{te^{-nt}\colon t\geq 0\right\}\leq \frac{1}{en}.$$
		Therefore, $s(b_n)\to 0$. Now let $c_n=a_n(\identity+ina_n)^{-1}$, so that $c_n$ is normal and 
		$$s(c_n)=\rho(c_n)=\sup\left\{\frac{\chi(a_n)}{\abs{1+in\chi(a_n)}}\colon \chi\in\mathfrak{M}\right\}\leq \sup\left\{\frac{t}{\abs{1+int}}\colon t\geq 0\right\}\leq \frac{1}{n}.$$
		Thus, $s(c_n)\to 0$. To get the result, let $a_n=\sqrt{a+\frac{1}{n^2}\mathbf 1}$ for each $n\in\mathbb{N}$.
 	\end{proof}
 	
 	\begin{lemma}\label{lem:4}
 		Let $\phi$ be a continuous spectrally multiplicative function on $A$ with $\phi(\identity)=1$. If $\psi_\phi(x)=0$, then $\phi(x)=0$.
 	\end{lemma}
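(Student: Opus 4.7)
The plan is to apply the spectral bound $|\phi(x)\phi(v_n)|\le s(xv_n)$ along a sequence $(v_n)\subset G_{\mathbf 1}(A)$ for which $\phi(v_n)$ stays bounded away from zero while $s(xv_n)\to 0$. The correct choice, suggested by Lemma~\ref{lem:3}(b), is $v_n := (\mathbf 1 + inw_n)^{-1}$ with $w_n := \sqrt{a + \tfrac{1}{n^2}\mathbf 1}$ and $a := x^\star x$. By Shirali--Ford, $a\ge 0$; and because $\psi_\phi$ is a character,
$$\psi_\phi(a) = \psi_\phi(x^\star)\psi_\phi(x) = \overline{\psi_\phi(x)}\,\psi_\phi(x) = 0,$$
so Theorem~\ref{TSB} transfers this to $\phi(a)=0$ (using $a\in S_A$).

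Next I would compute $\phi(v_n)$ exactly. The straight-line homotopy $t\mapsto \mathbf 1 + itnw_n$ has spectrum with real part $1$, so it stays in $G(A)$; hence $v_n \in G_{\mathbf 1}(A)$ and $\phi(v_n)=\psi_\phi(v_n)$ by Theorem~\ref{TSB}. Strict positivity of $w_n$ (its spectrum lies in $[\tfrac{1}{n},\infty)$) forces $\psi_\phi(w_n)\in (0,\infty)$, and $\psi_\phi(w_n)^2 = \psi_\phi(a)+1/n^2 = 1/n^2$ pins it down to $1/n$. Therefore $\phi(v_n) = 1/(1+i) = (1-i)/2$, so $|\phi(v_n)|=1/\sqrt 2$ for every $n$.

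For the decay of $s(xv_n)$, the elements $a, w_n, v_n, v_n^\star$ all lie in the commutative bicommutant of $\{a\}$, so a direct calculation gives $v_n^\star v_n = (2\mathbf 1 + n^2 a)^{-1}$ and
$$s(xv_n)^2 = \rho(v_n^\star a v_n) = \rho\bigl(a(2\mathbf 1 + n^2 a)^{-1}\bigr) \le \sup_{t\ge 0}\frac{t}{2+n^2 t} = \frac{1}{n^2}.$$
(Alternatively, $0\le a\le w_n^2$ together with monotonicity of $\rho$ on the positive cone yields $s(xv_n)\le s(w_n v_n)\to 0$ via Lemma~\ref{lem:3}(b).) Spectral multiplicativity then gives $|\phi(x)|/\sqrt 2 \le s(xv_n)\to 0$, forcing $\phi(x)=0$.

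The only delicate point is arranging $\phi(v_n)$ to be a nonzero constant: this requires both the $G_{\mathbf 1}(A)$-conclusion of Theorem~\ref{TSB} and the strict positivity of $w_n$ to pin $\psi_\phi(w_n)=+1/n$ rather than $-1/n$; both features are engineered by the perturbation $+\tfrac{1}{n^2}\mathbf 1$ that underlies Lemma~\ref{lem:3}.
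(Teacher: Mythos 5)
Your argument is correct, and although it follows the same overall strategy as the paper's proof --- manufacture a sequence $(v_n)$ with $\phi(v_n)$ equal to a fixed nonzero constant while $s(xv_n)\to 0$, then squeeze $\abs{\phi(x)\phi(v_n)}\leq\rho(xv_n)\leq s(xv_n)$ --- your choice of auxiliary element is genuinely different and in some respects cleaner. The paper builds its sequence from $\Rea(x)^2+\Ima(x)^2+\tfrac{2}{n^2}\identity$ and uses the exponential $W_n=e^{-n\sqrt{\cdots}}$ of Lemma~\ref{lem:3}(a); it must then invoke Lemmas~\ref{lem:1} and~\ref{lem:2} to pass from $s\bigl(\sqrt{\cdots}\,W_n\bigr)\to 0$ to $s(\Rea(x)W_n)\to 0$ and $s(\Ima(x)W_n)\to 0$, and finally subadditivity of $s$ to reassemble $s(xW_n)\to 0$. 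By instead building $w_n$ from $a=x^\star x$ (positive by Shirali--Ford) you exploit the fact that $s(xv_n)^2=\rho(v_n^\star x^\star x v_n)$ already contains $a$: since $w_n$, $v_n$, $v_n^\star$ all lie in the commutative bicommutant of $\{a\}$ and $w_n^\star=w_n$ exactly (the positive square root of \eqref{sqrt} is genuinely self-adjoint, not merely modulo the radical), the identity $v_n^\star v_n=(2\identity+n^2a)^{-1}$ and the spectral mapping theorem for $t\mapsto t/(2+n^2t)$ give $s(xv_n)^2\leq 1/n^2$ outright, with no appeal to Lemmas~\ref{lem:1}--\ref{lem:3} and no decomposition of $x$ into real and imaginary parts. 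Your evaluation of $\phi(v_n)=(1-i)/2$ is also sound: $\psi_\phi$ is a character by Theorem~\ref{TSB}, so $\psi_\phi(a)=\psi_\phi(x^\star)\psi_\phi(x)=0$, strict positivity of $w_n$ pins $\psi_\phi(w_n)=1/n$, and the homotopy $t\mapsto\identity+itnw_n$ (whose spectrum has real part $1$) places $v_n$ in $G_{\mathbf 1}(A)$ so that $\phi(v_n)=\psi_\phi(v_n)$; this is exactly the mechanism the paper deploys for its $V_n$ in Lemma~\ref{no_zero}. In effect you have run Lemma~\ref{lem:4} with the resolvent machinery of Lemma~\ref{no_zero} specialised to $x^\star x$. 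The only trade-off is organisational: the paper's Lemmas~\ref{lem:1}, \ref{lem:2} and the $R,I$ decomposition are still indispensable later (in Lemmas~\ref{lem:5} and~\ref{no_zero}, where the element being multiplied is $\alpha\identity+x$ rather than $x$ and the trick of absorbing $x^\star x$ into $v_n$ no longer closes the computation), so the paper's arrangement reuses its lemmas more uniformly, whereas your version buys a shorter, self-contained proof of this particular lemma.
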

 	
 	\begin{proof}
 		To simplify notation write $R=\Rea(x)$ and $I=\Ima(x)$. Then $R$ and $I$ are self-adjoint and $\psi_\phi(R)=\psi_\phi (I)=0$. For each $n\in\mathbb{N}$, let $$W_n \coloneqq e^{-n\sqrt{R^2+I^2+\frac{2}{n^2}\mathbf 1}}.$$ Hence, 
 		$$\phi (W_n)=\psi_\phi(W_n)= e^{-\sqrt2}.$$
 		Since $\phi$ is spectrally multiplicative it then follows that 
 		$$\phi(x)e^{-\sqrt2}=\phi(x)\phi(W_n)\in \sigma(xW_n), \text{ for each } n\in\mathbb{N}.$$
 		From Lemma \ref{lem:3}, we have that $s\left(\sqrt{R^2+I^2+\frac{2}{n^2}\mathbf 1}\,W_n\right)\to 0$. Notice that $${R^2+\frac{1}{n^2}\mathbf 1}<{R^2+I^2+\frac{2}{n^2}\mathbf 1},$$ so Lemma \ref{lem:1} says that $s\left(\sqrt{R^2+\frac{1}{n^2}\mathbf 1}\,W_n\right)\to 0$. Now Lemma \ref{lem:2} implies that $s(RW_n)\to 0$. Similarly, $s(IW_n)\to 0$. Therefore,
 		$$s(xW_n) = s(RW_n+iIW_n)\leq s(RW_n)+s(IW_n)\to 0.$$
 		From earlier
 		$${\abs{\phi(x)}}e^{-\sqrt2}\leq \rho(xW_n)\leq s(xW_n),$$
 		which means $\abs{\phi(x)}\leq e^{\sqrt 2}s(xW_n)$. Taking limits as $n\to\infty $ yields $\phi(x)=0$.
 	\end{proof}
 	
 	\begin{lemma}\label{lem:5}
 		Let $\phi$ be a continuous spectrally multiplicative function on $A$ with $\phi(\identity)=1$. If $\alpha\in\field$, and $x\in A$ satisfies $\psi_\phi(x)=0$, then $\phi(\alpha\identity+x)=c_\alpha \alpha$ for some $c_\alpha\in [0,1]$.
 	\end{lemma}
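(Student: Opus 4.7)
The plan is to reuse the sequence
\[
W_n = \exp\!\Bigl(-n\sqrt{R^2+I^2+\tfrac{2}{n^2}\mathbf{1}}\Bigr),\qquad R=\Rea(x),\ I=\Ima(x),
\]
from the proof of Lemma~\ref{lem:4}. Since $\psi_\phi(R)=\psi_\phi(I)=0$ and $W_n\in G_{\mathbf{1}}(A)$, Theorem~\ref{TSB} gives $\phi(W_n)=\psi_\phi(W_n)=e^{-\sqrt{2}}$, and spectral multiplicativity applied to $(\alpha\mathbf{1}+x,\,W_n)$ yields
\[
\phi(\alpha\mathbf{1}+x)\,e^{-\sqrt{2}}\in\sigma(\alpha W_n+xW_n)\qquad (n\in\mathbb{N}).
\]
The strategy is to trap the fixed scalar $\mu:=\phi(\alpha\mathbf{1}+x)\,e^{-\sqrt{2}}$ on the closed line segment $\alpha\cdot[0,e^{-\sqrt{2}}]$; rescaling by $e^{\sqrt{2}}$ and reading off $c_\alpha\in[0,1]$ then finishes the proof. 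The case $\alpha=0$ is, of course, already settled by Lemma~\ref{lem:4}.

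To trap $\mu$, I would work modulo $\rad(A)$, where the involution is continuous. The element $W_n$ is positive self-adjoint modulo the radical with $\sigma(W_n)\subseteq(0,e^{-\sqrt{2}}]$, so for every $\lambda\in\mathbb{C}$ the element $\alpha W_n-\lambda\mathbf{1}$ and, whenever $\lambda\notin\alpha\sigma(W_n)$, its inverse are normal modulo the radical. The preliminary identity $s=\rho$ on such elements then gives
\[
s\bigl((\alpha W_n-\lambda\mathbf{1})^{-1}\bigr)=\frac{1}{\dist(\lambda,\alpha\sigma(W_n))}.
\]
Factorising $\alpha W_n+xW_n-\lambda\mathbf{1}=(\alpha W_n-\lambda\mathbf{1})\bigl(\mathbf{1}+(\alpha W_n-\lambda\mathbf{1})^{-1}xW_n\bigr)$, non-invertibility of the left-hand side forces $\rho\bigl((\alpha W_n-\lambda\mathbf{1})^{-1}xW_n\bigr)\ge 1$; combining this with submultiplicativity of $s$ and the dominance $\rho\le s$ yields $\dist(\lambda,\alpha\sigma(W_n))\le s(xW_n)$. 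Consequently
\[
\sigma(\alpha W_n+xW_n)\subseteq \alpha\cdot[0,e^{-\sqrt{2}}]+\overline{B(0,s(xW_n))}.
\]

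The proof of Lemma~\ref{lem:4} already supplies $s(xW_n)\to 0$, so $\mu$ lies at arbitrarily small distance from the closed segment $\alpha\cdot[0,e^{-\sqrt{2}}]$ and is therefore on it. Writing $\mu=\alpha t$ with $t\in[0,e^{-\sqrt{2}}]$ and setting $c_\alpha:=t\,e^{\sqrt{2}}\in[0,1]$ completes the argument.

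The main obstacle — and the point at which Hermiticity is essential rather than decorative — is the perturbation estimate in the middle paragraph. Only $s(xW_n)\to 0$ is available, so a direct appeal to Newburgh-type upper semicontinuity of the spectrum (which requires norm convergence) is not at hand. The rescue is precisely the identity $s=\rho$ on normal-mod-radical elements supplied by the Hermitian structure: it converts the Pt\'ak functional of the resolvent into an explicit reciprocal of the spectral distance, standing in for the $C^\star$-norm that is missing in the general Hermitian setting.
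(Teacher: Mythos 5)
Your proof is correct and follows essentially the same route as the paper's: the same sequence $W_n$, the same resolvent factorisation, and the same use of $s=\rho$ on normal-mod-radical elements together with submultiplicativity of $s$ and $s(xW_n)\to 0$. The only difference is presentational --- you package the key estimate as the uniform spectral inclusion $\sigma(\alpha W_n+xW_n)\subseteq\alpha\cdot[0,e^{-\sqrt 2}]+\overline{B(0,s(xW_n))}$ and conclude directly, whereas the paper argues by contradiction at the single point $c_\alpha e^{-\sqrt 2}$.
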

 	\begin{proof}
 		If $\alpha=0$ then the result follows from Lemma~\ref{lem:4}, so take $\alpha\neq 0$. Using $W_n$ as before, let $Y_n=\frac{1}{\alpha} xW_n$ and set $c_\alpha = \phi(\alpha\identity+x)/\alpha$. Then, 
 		$${c_\alpha}{e^{-\sqrt2}}=\frac{1}{\alpha}\phi(\alpha\identity+x)\phi(W_n)\in \sigma(W_n+Y_n).$$
 		Also, $s(Y_n)\to 0$ from the previous proof. Suppose, for the sake of contradiction, that $c_\alpha \not \in [0,1]$ whence $c_\alpha e^{-\sqrt2}\not\in \left[0,e^{-\sqrt2}\right]$. From the definition of $W_n$ one observes that $\sigma(W_n)\subseteq [0,e^{-\sqrt2}]$ for each $n\in\mathbb{N}$. Then we must have that 
 		$$\identity-Y_n\left(c_\alpha e^{-\sqrt2}\identity-W_n\right)^{-1}\not\in G(A),$$
 		for otherwise $c_\alpha e^{-\sqrt2}\identity-W_n-Y_n \in G(A)$ -- a contradiction. But this then means that $$s\left(Y_n\left(c_\alpha e^{-\sqrt2}\identity-W_n \right)^{-1}\right)\geq 1 \mbox{ for all } n\in\mathbb N.$$ Furthermore, $\left(c_\alpha e^{-\sqrt2}\identity-W_n \right)^{-1} $ is normal modulo $\rad(A)$ for each $n$, and hence 
 		$$s\left(\left(c_\alpha e^{-\sqrt2}\identity-W_n \right)^{-1}\right)=\rho\left(\left(c_\alpha e^{-\sqrt2}\identity-W_n\right)^{-1}\right)\leq \frac{1}{\dist\left(\left[0, e^{-\sqrt2}\right], c_\alpha e^{-\sqrt2}\right)}.$$
 		Since $s$ is submultiplicative we get that $s\left(Y_n\left(c_\alpha e^{-\sqrt2}\identity-W_n \right)^{-1}\right)\to 0$, which contradicts the fact that it must be at least $1$ for all $n$. Thus, $\phi(\alpha\identity+x)=c_\alpha\alpha$, with $c_\alpha\in [0,1]$.
  	\end{proof}

  	\begin{lemma}\label{no_zero}
  		Let  $\phi$ be a 
  		continuous   spectrally multiplicative  function $A$ with $\phi(\identity)=1$. If $\alpha \in \mathbb{C},$ and  $x\in A$  satisfies $\psi_\phi(x)=0 $, then $\phi(\alpha\mathbf{1}+ x)\in\{ 0,\alpha  \}.$
  	\end{lemma}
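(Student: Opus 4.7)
The plan is to strengthen the conclusion of Lemma~\ref{lem:5}: given $\phi(\alpha\identity + x) = c_\alpha \alpha$ with $c_\alpha \in [0,1]$, we want to upgrade this to $c_\alpha \in \{0,1\}$. The case $\alpha = 0$ is immediate from Lemma~\ref{lem:4}, so assume $\alpha \neq 0$ throughout and set $b \coloneqq \alpha\identity + x$, noting that $\psi_\phi(b) = \alpha$.

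The invertible half of the argument handles itself cleanly. Suppose $b \in G(A)$. Since $\psi_\phi$ is a character (by Theorem~\ref{TSB}), $\psi_\phi(b^{-1}) = 1/\alpha$, and so $b^{-1} = \alpha^{-1}\identity + z'$ for some $z'$ with $\psi_\phi(z') = 0$. Applying Lemma~\ref{lem:5} to $z'$ at parameter $\alpha^{-1}$ yields $\phi(b^{-1}) = c'\alpha^{-1}$ for some $c' \in [0,1]$. Spectral multiplicativity then gives $\phi(b)\phi(b^{-1}) \in \sigma(\identity) = \{1\}$, i.e.\ $c_\alpha c' = 1$; with both factors in $[0,1]$ this forces $c_\alpha = c' = 1$, so $\phi(b) = \alpha$.

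The real obstacle is the singular case, $b \notin G(A)$ (equivalently $-\alpha \in \sigma(x)$). When $-\alpha$ can be approached from $\mathbb{C}\setminus\sigma(x)$, a simple perturbation $\alpha \mapsto \alpha + \epsilon$ combined with the invertible case and continuity of $\phi$ delivers $\phi(b) = \alpha$. The delicate remaining scenario is when $-\alpha$ lies in the interior of $\sigma(x)$, where no such perturbation lives outside $\sigma(x)$. My plan here is to mimic the $W_n$-construction of Lemmas~\ref{lem:3} and~\ref{lem:5}, now driven by the positive self-adjoint element $b^\star b$ (which is singular because $b$ is): take $W_n$ in the bicommutant of $b^\star b$ via holomorphic functional calculus on $b^\star b + \tfrac{1}{n^2}\identity$, arranged so that $W_n$ is self-adjoint with $\phi(W_n) = \psi_\phi(W_n)$ computable from $\psi_\phi(b^\star b) = |\alpha|^2$ (via Theorem~\ref{TSB}), while $s(bW_n) \to 0$. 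Then $\phi(b)\phi(W_n) \in \sigma(bW_n)$ together with $|\phi(b)\phi(W_n)| \le \rho(bW_n) \le s(bW_n)$ should squeeze $c_\alpha$ to an endpoint of $[0,1]$, provided the decay of $s(bW_n)$ dominates that of $\phi(W_n)$. Engineering this decay gap, and controlling the non-commutativity of $b$ with $W_n$ via submultiplicativity of $s$ and the Shirali--Ford inequality (since $b$ is not assumed normal), is the delicate core of the argument.
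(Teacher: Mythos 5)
Your handling of the invertible case is correct and is a genuinely different (and rather slick) argument from the paper's: from $\phi(b)\phi(b^{-1})\in\sigma(\identity)=\{1\}$ and two applications of Lemma~\ref{lem:5} you get $c_\alpha c'=1$ with both factors in $[0,1]$, hence $c_\alpha=1$; the boundary case then follows by perturbation and continuity. The paper does not split into cases at all, so up to this point you have a legitimate alternative route. The problem is that the entire difficulty of the lemma sits in the remaining case ($-\alpha$ interior to $\sigma(x)$), and there your plan is not a proof but a sketch --- and, as described, it cannot be made to work.

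The obstruction is quantitative and unavoidable. Whatever self-adjoint $W_n$ you build from $b^\star b$ by functional calculus, $\psi_\phi$ is a character with $\psi_\phi(b)=\alpha$ and $|\psi_\phi(y)|\leq\rho(y)$ for all $y$, so
$$s(bW_n)\;\geq\;\rho(bW_n)\;\geq\;|\psi_\phi(bW_n)|\;=\;|\alpha|\,|\psi_\phi(W_n)|\;=\;|\alpha|\,|\phi(W_n)|.$$
Hence $s(bW_n)/|\phi(W_n)|\geq|\alpha|$ for every $n$: the ``decay gap'' you hope to engineer provably does not exist, and the modulus estimate $|\phi(b)|\,|\phi(W_n)|\leq s(bW_n)$ can never yield anything sharper than $|c_\alpha|\leq\text{(something}\geq 1)$, which you already know. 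The squeeze loses exactly the information that distinguishes $c_\alpha\in(0,1)$ from $c_\alpha\in\{0,1\}$. The paper's proof avoids this by working with $x$ (for which $\psi_\phi(x)=0$) rather than with $b$: it sets $V_n=\bigl(\identity+in\sqrt{R^2+I^2+\tfrac{1}{n^2}\identity}\bigr)^{-1}$, so that $\phi(V_n)=\psi_\phi(V_n)=\tfrac{1}{2}-\tfrac{i}{2}$ is a \emph{fixed nonzero constant} while $s(xV_n)\to 0$, and then exploits the geometry of the complex plane rather than moduli: $\sigma(V_n)$ lies on the circle $C_r$ of centre $\tfrac12$ and radius $\tfrac12$, the point $c_\alpha(\tfrac12-\tfrac{i}{2})$ lies on $C_r$ precisely when $c_\alpha\in\{0,1\}$ and strictly inside it when $0<c_\alpha<1$, and the resulting positive distance to $\sigma(V_n)$ gives a uniform resolvent bound that contradicts $c_\alpha(\tfrac12-\tfrac{i}{2})\in\sigma(V_n+\tfrac{1}{\alpha}xV_n)$ once $s(\tfrac{1}{\alpha}xV_n)\to 0$. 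Some argument of this two-dimensional, location-in-the-spectrum type (not a modulus bound) is the missing idea in your interior case.
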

  	\begin{proof}
  		For each $n\in\mathbb N$ let $V_n:=\left(\mathbf{1}+in\sqrt{R^2+I^2+\frac{1}{n^2}\identity} \right)^{-1}.$
  		Again using Lemma~\ref{lem:1}, Lemma~\ref{lem:2} and Lemma~ \ref{lem:3}, we have that $$\lim_ns\left(\sqrt{R^2+I^2+\frac{1}{n^2}\identity}\, V_n\right)={0} \implies \lim_ns\left( R  V_n\right)=0.$$
  		Similarly  $\lim_ns\left( I  V_n\right)=0 $. Observe that each $V_n$ belongs to $ G_{\mathbf 1}(A)$, whence it follows that $\phi(V_n) = \psi_\phi(V_n)=1/2-i/2 $. Let $\alpha \neq 0 $. From Lemma~\ref{lem:5}, we have that $\phi(\alpha\mathbf{1}+x)=c_\alpha \alpha $, with $c_\alpha \in [0,1]$. To obtain the result we have to show that $c_\alpha \in \{0,1\}$: For the sake of a contradiction assume that $ 0< c_\alpha <1$. If we set $Z_n:=\frac{1}{\alpha}xV_n=\frac{1}{\alpha}(R+iI) V_n$, then
  		
  		\begin{equation}\label{3}
  			c_\alpha(1/2-i/2)=\frac{1}{\alpha}\phi(\alpha\mathbf{1}+x)\phi(V_n)\in \sigma\left(V_n+Z_n\right).
  		\end{equation}
  		
  		The first paragraph of the proof shows that $\lim_n s\left(Z_n\right)=0$, and \eqref{3} shows that 
  		$c_\alpha(1/2-i/2)\mathbf{1}-V_n-Z_n  \notin G(A) $. From the definition of $V_n$ together with the spectral mapping theorem, and the fact that $0<\sqrt{R^2+I^2+\frac{1}{n^2}\identity}$ we have that $\sigma\left(V_n\right) \subseteq C_r $, where $C_r$ is the circle in $\mathbb C$ with center $\frac{1}{2}$ and radius $\frac{1}{2}$. Thus $$c_\alpha(1/2-i/2)\mathbf{1} -V_n-Z_n \notin G(A) \mbox{ and } c_\alpha(1/2-i/2)\mathbf{1} -V_n \in G(A) ,$$ which together implies that 
  		\begin{equation}\label{4}
  		 \mathbf{1}-Z_n\left( c_\alpha(1/2-i/2)\mathbf{1} -V_n\right)^{-1} \notin G(A).
  		\end{equation}
  		Since $V_n$ is normal we obtain the estimate
  		\begin{align*}
  		s\left(\left( c_\alpha(1/2-i/2)\mathbf{1} -V_n\right)^{-1}\right)&=\rho\left( \left( c_\alpha(1/2-i/2)\mathbf{1} -V_n\right)^{-1} \right)\\& \leq \frac{1}{\dist(C_r,\{ c_\alpha(1/2-i/2) \} ) }
  		\end{align*}
  		from which it follows that $\lim_ns\left(  Z_n\left( c_\alpha(1/2-i/2)\mathbf{1} -V_n\right)^{-1}\right)=0$, contradicting \eqref{4}. Subsequently $c_\alpha \in \{0,1\}$, and $\phi(\alpha\mathbf{1}+x)\in \{0, \alpha \}$ follows as advertised. 
  	\end{proof}
  	
  	The proof of Theorem~\ref{character} now follows as in \cite{Toure2}:
  	
  	\begin{theorem}\label{character}
  		Let  $\phi$ be a 
  		continuous spectrally multiplicative  function on $A$ with $\phi(\identity)=1$. Then $\phi(x) = \psi_\phi(x)$ for all  $x$ in $A$, and hence $\phi$ is a character of $A$. 
  	\end{theorem}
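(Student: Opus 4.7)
My plan is to leverage Theorem~\ref{TSB}, which already guarantees that $\psi_\phi$ is a character, in tandem with the rigid dichotomy provided by Lemma~\ref{no_zero}. The aim is to prove the pointwise identity $\phi=\psi_\phi$ on all of $A$; the character property of $\phi$ then follows immediately from Theorem~\ref{TSB}.

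First I would fix an arbitrary $x\in A$ and set $\alpha:=\psi_\phi(x)$. Writing $x=\alpha\identity+y$ with $y:=x-\alpha\identity$, the linearity of the character $\psi_\phi$ together with $\psi_\phi(\identity)=1$ gives $\psi_\phi(y)=0$. Lemma~\ref{no_zero} applied to $y$ and the scalar $\alpha$ then forces the two-point inclusion
\[
\phi(x)=\phi(\alpha\identity+y)\in\{0,\alpha\}.
\]
If $\alpha=0$, Lemma~\ref{lem:4} directly delivers $\phi(x)=0=\psi_\phi(x)$ and we are done.

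For $\alpha\neq 0$ the remaining task is to exclude the possibility $\phi(x)=0$, which I would handle via a continuity-connectedness argument along the straight-line path $t\mapsto \alpha\identity+ty$, $t\in[0,1]$. For every such $t$, linearity of $\psi_\phi$ gives $\psi_\phi(ty)=0$, so Lemma~\ref{no_zero} applies at each $t$ and yields
\[
f(t):=\phi(\alpha\identity+ty)\in\{0,\alpha\}\quad\text{for all }t\in[0,1].
\]
Continuity of $\phi$ makes $f\colon[0,1]\to\{0,\alpha\}$ a continuous map from a connected domain into a two-point set, hence constant. At $t=0$, the spectral multiplicativity relation $\phi(\alpha\identity)\phi(\identity)\in\sigma(\alpha\identity)=\{\alpha\}$ combined with $\phi(\identity)=1$ gives $f(0)=\phi(\alpha\identity)=\alpha$, so $f\equiv\alpha$ on the whole interval. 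In particular $\phi(x)=f(1)=\alpha=\psi_\phi(x)$.

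The substantive work here is really done upstream: Lemma~\ref{no_zero}, whose proof required the positive square root construction and the submultiplicativity of the Pt\'ak functional, is what delivers the discrete two-valued structure that the connectedness argument exploits. Once that is in hand, the remaining step is essentially cosmetic. The only two things that need checking are that the hypothesis $\psi_\phi(\cdot)=0$ required by Lemma~\ref{no_zero} persists along every point of the chosen path, which is immediate from the linearity of $\psi_\phi$, and that the starting value $f(0)$ is correctly identified as $\alpha$, which either follows from spectral multiplicativity or from the fact that $\alpha\identity\in G_{\identity}(A)$ combined with the last clause of Theorem~\ref{TSB}.
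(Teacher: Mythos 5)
Your argument is correct, and all the load-bearing ingredients (the decomposition $x=\psi_\phi(x)\identity+\left[x-\psi_\phi(x)\identity\right]$, Lemma~\ref{lem:4}, and the dichotomy of Lemma~\ref{no_zero}) are exactly those the paper uses; the difference lies only in how continuity is exploited to exclude the value $0$. The paper fixes the element $y$ with $\psi_\phi(y)=0$ and varies the \emph{scalar}: it studies the set $K_y=\{\alpha\in\mathbb C:\phi(\alpha\identity+y)=0\}$, notes it is compact and contained in $\sigma(-y)$, and derives a contradiction at a maximum-modulus point of $K_y$ by approximating it with scalars $k_n\notin K_y$, where Lemma~\ref{no_zero} forces $\phi(k_n\identity+y)=k_n$. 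You instead fix the scalar $\alpha=\psi_\phi(x)$ and vary the \emph{element} along the segment $t\mapsto\alpha\identity+ty$, observing that $\psi_\phi(ty)=0$ persists, so that $f(t)=\phi(\alpha\identity+ty)$ is a continuous map of the connected set $[0,1]$ into the two-point set $\{0,\alpha\}$, hence constant, with $f(0)=\phi(\alpha\identity)=\alpha$ pinned down by spectral multiplicativity. Your route is arguably a little cleaner: it needs no discussion of compactness of $K_y$ or of approximating sequences from its complement, and it establishes $\phi(x)=\psi_\phi(x)$ for the given $x$ directly rather than first proving $\phi(\alpha\identity+y)=\alpha$ for every scalar $\alpha$. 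The paper's version yields that slightly stronger intermediate statement as a by-product, but for the theorem itself the two arguments are interchangeable.
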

  	
  	\begin{proof}
  		For $x\in A$ define $K_x:=\{\alpha \in \mathbb{C} : \phi(\alpha\mathbf{1}+x)=0\}$, and assume first that $\psi_\phi(x)=0$. Our aim is to prove that $K_x=\{0\}$. Observe that $0 \in K_x$ (by Lemma~\ref{lem:4}), $K_x \subseteq \sigma(-x)$, and, since $\phi$ is continuous, that $K_x$ is closed. Thus $K_x$ is nonempty and compact.  Let $m$  be a maximum modulus element of  $K_x$. From the definition of $m$ there is a sequence $(k_n) \subset\mathbb C\setminus K_x$ which converges to $m$.  Therefore, by Lemma~\ref{no_zero},  $\lim_n\phi\big(k_n\mathbf{1}+x \big)=\lim_n k_n =m $, and by continuity of $\phi$, $\lim_n\phi\big(k_n\mathbf{1}+x \big)=\phi\big(m\mathbf{1}+x \big)=0 $. Thus $m=0$ from which it follows that $K_x=\{0\}$. Invoking Lemma~\ref{no_zero} again we then obtain $\phi(\alpha\mathbf{1}+x)=\alpha$ for each $\alpha\in\mathbb C$. For any value of $ \psi_\phi(x) $ we use the first part of the proof to deduce that  $$  \phi(x)=\phi\left(  \psi_\phi(x)\mathbf{1}+\left[x-\psi_\phi(x)\mathbf{1}\right] \right)= \psi_\phi(x). $$
  		\end{proof}

  	As a direct consequence of Theorem~\ref{character} we then obtain: 
  	
  	\begin{corollary}\label{character1}
  		Let  $\phi$ be a 
  		continuous spectrally multiplicative  function on $A$. Then  either $\phi$ is a character of $A$ or $-\phi$ is a character of $A$.
  	\end{corollary}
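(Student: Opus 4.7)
The plan is to reduce to Theorem~\ref{character} by controlling the value of $\phi$ at the identity. The key observation is that spectral multiplicativity applied to $x=y=\identity$ forces
\[
\phi(\identity)^2 = \phi(\identity)\phi(\identity) \in \sigma(\identity\cdot\identity) = \{1\},
\]
so $\phi(\identity)\in\{1,-1\}$. In particular $\phi(\identity)\neq 0$, so $\phi$ is automatically nonzero (which is needed to call it a character).

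Next I would split into two cases. If $\phi(\identity)=1$, then Theorem~\ref{character} applies directly and gives that $\phi$ is a character of $A$. If instead $\phi(\identity)=-1$, I would replace $\phi$ by $\tilde\phi:=-\phi$. This new function is continuous, satisfies $\tilde\phi(\identity)=1$, and is still spectrally multiplicative because
\[
\tilde\phi(x)\tilde\phi(y) = (-\phi(x))(-\phi(y)) = \phi(x)\phi(y) \in \sigma(xy)
\]
for all $x,y\in A$. Applying Theorem~\ref{character} to $\tilde\phi$ then shows that $-\phi$ is a character of $A$.

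There is no real obstacle here; the entire content is already in Theorem~\ref{character}, and the corollary merely absorbs the sign ambiguity introduced by not normalising $\phi(\identity)$ in advance. It is precisely this sign ambiguity — inherent in the quadratic relation $\phi(\identity)^2=1$ coming from spectral multiplicativity — that forces the conclusion to be stated as a dichotomy between $\phi$ and $-\phi$, rather than $\phi$ alone, matching the form of Theorem~\ref{TBS}.
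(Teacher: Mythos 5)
Your proposal is correct and follows essentially the same route as the paper: extract $\phi(\identity)^2\in\sigma(\identity)=\{1\}$ to force $\phi(\identity)=\pm 1$, note that $-\phi$ is again spectrally multiplicative, and apply Theorem~\ref{character} to whichever of $\phi$, $-\phi$ sends $\identity$ to $1$. Your write-up merely makes explicit the step $\phi(\identity)^2=1$ that the paper leaves implicit.
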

  	\begin{proof}
  		If $\phi$ is spectrally multiplicative then so is $-\phi$. But either $\phi\left(\identity\right)=1$ or $-\phi\left(\identity\right)=1$.  So the result follows from Theorem~\ref{character}.
  	\end{proof}

\end{document}